\newcommand{\lexp}[2]{\null^{#2} \mkern-2mu #1}
\newcommand{\lexpp}[2]{\null^{#2} \mkern-2mu (#1)}
\newcommand{\lexpq}[2]{(\null^{#2} \mkern-2mu #1)}
\renewcommand{\epsilon}{\varepsilon}
\renewcommand{\theta}[0]{\vartheta}
\renewcommand{\phi}[0]{\varphi}
\newcommand{\trivial}[0]{\mathrm{triv}}
\newcommand{\lift}[1]{#1^{\uparrow}}
\newcommand{\wasoverline}[1]{#1}
\newcommand{\N}{\mathbb{N}}
\newcommand{\Z}{\mathbb{Z}}
\newcommand{\Size}[1]{\left\lvert #1 \right\rvert}
\newcommand{\Span}[1]{\left\langle\, #1 \,\right\rangle}
\newcommand{\Set}[1]{\left\{ #1 \right\}}
\newcommand{\Zero}[0]{\mathbf{0}}
\DeclareMathOperator{\GL}{GL}
\DeclareMathOperator{\End}{End}
\DeclareMathOperator{\Aut}{Aut}
\DeclareMathOperator{\Frat}{Frat}
\DeclareMathOperator{\Perm}{Perm}
\newtheorem{dummy}{Dummy}
\numberwithin{dummy}{section}
\numberwithin{figure}{section}
\newtheorem{theorem}[dummy]{Theorem}
\newtheorem{lemma}[dummy]{Lemma}
\newtheorem{proposition}[dummy]{Proposition}
\newtheorem{corollary}[dummy]{Corollary} 
\theoremstyle{definition}
\newtheorem{definition}[dummy]{Definition}
\newtheorem{example}[dummy]{Example}
\theoremstyle{remark}
\newtheorem{remark}[dummy]{Remark}
\def\imod#1{\allowbreak\mkern10mu({\operator@font mod}\,\,#1)}
\numberwithin{equation}{section}
\newcommand{\kN}{\mathfrak{N}}
\DeclareMathOperator{\id}{id}
\begin{document}

\date{25 July 2022, 10:11 CEST --- Version 4.03}

\title[Brace blocks]
    {Brace blocks from bilinear maps\\
      and liftings of endomorphisms}    
\author{A. Caranti}

\address[A.~Caranti]%
 {Dipartimento di Matematica\\
  Universit\`a degli Studi di Trento\\
  via Sommarive 14\\
  I-38123 Trento\\
  Italy} 

\email{andrea.caranti@unitn.it} 

\urladdr{https://caranti.maths.unitn.it/}

\author{L. Stefanello}

\address[L.~Stefanello]
        {Dipartimento di Matematica\\
          Universit\`a di Pisa\\
          Largo Bruno Pontecorvo, 5\\
          I-56127 Pisa\\
          Italy}
\email{lorenzo.stefanello@phd.unipi.it}

\urladdr{https://people.dm.unipi.it/stefanello}
\subjclass[2010]{20B05 20D45 20N99 08A35 16T25}

\keywords{endomorphisms, regular subgroups, skew
  braces, brace blocks, normalising graphs,
   Yang--Baxter equation}

\thanks{Both authors are members of INdAM---GNSAGA. The first author
gratefully acknowledges support from the Department of Mathematics of
the University of Trento. The second author
gratefully acknowledges support from the Department of Mathematics of
the University of Pisa.}

\begin{abstract}
  We  extend two  constructions of  Alan Koch,  exhibiting methods  to
  construct brace blocks,  that is, families of group  operations on a
  set $G$ such that any two of them induce a  skew brace structure on
  $G$.

  We construct these operations by using bilinear maps and liftings
  of endomorphisms of quotient groups with respect to a central
  subgroup.

  We provide several examples of  the construction, showing that there
  are brace blocks  which consist of distinct operations  of any given
  cardinality.

  One of the examples we give yields an answer to a question of Cornelius
  Greither. This example exhibits a sequence of distinct operations on the
  $p$-adic Heisenberg group $(G, \cdot)$ such that any two operations
  give a skew brace structure on $G$ and the sequence of operations
  converges to the original operation ``$\cdot$''.
\end{abstract}

\maketitle

\thispagestyle{empty}

\section{Introduction}

Alan   Koch    gave   in~\cite{Koch-Abelian}    the   following
construction.  Let $(G,  \cdot)$  be a  group, and  let  $\psi$ be  an
endomorphism of $G$ with an abelian image. Define an operation on $G$ via
\begin{equation}
  \label{eq:K1}
  g \circ h
  =
  g \cdot \lexpq{g}{\psi}^{-1} \cdot h \cdot \lexp{g}{\psi},
\end{equation}
where we  denote by  $\lexp{g}{\psi}$ the  value of  the map  $\psi$ on
$g$. Koch proved that $(G, \cdot, \circ)$ is a bi-skew brace.

In~\cite{CS1} we
characterised the endomorphisms $\psi$ of $G$ such that the operations
\begin{equation}
  \label{eq:pmepsilon}
  g \circ h
  =
  g \cdot \lexpq{g}{\psi}^{\epsilon} \cdot h \cdot \lexpq{g}{\psi}^{-\epsilon}
\end{equation}
yield (bi-)skew braces $(G, \cdot, \circ)$, for $\epsilon = \pm 1$. In
particular, when
\begin{equation}
  \label{eq:psi-psi}
  [ \lexp{G}{\psi}, \lexp{G}{\psi} ] \le Z(G),
\end{equation}
all
operations~\eqref{eq:pmepsilon} yield bi-skew braces $(G, \cdot, \circ)$,
for $\epsilon = \pm 1$.

In~\cite{Koc22} Koch  showed that  if $(G, \cdot)$  is a  group and
$\psi$ is an endomorphism of $G$ with an abelian image, then $\psi$ is
also  an  endomorphism of  the  group  $(G,  \circ)$, for  $\circ$  as
in~\eqref{eq:K1}. Iterating this procedure, Koch was able to construct
a countable family $( \circ_{n} : n \in
\N)$ of  group operations on the  set $G$, starting with  the original
operation  ``$\cdot$''  as  $\circ_{0}$,  such  that  $(G,  \circ_{n},
\circ_{m})$ is a skew brace for all $n, m \ge 0$. Koch named such a
family a \emph{brace block}.

In this  paper we  extend both of  Koch's constructions.  An immediate
generalisation  could be  obtained  by replacing  the assumption  that
$\psi$ has  an abelian  image with  assumption~\eqref{eq:psi-psi}.  We
work, however,  in the  following slightly  more general  context. Let
$(G, \cdot)$  be a group, and  let $K \le  A \le G$ be  subgroups such
that $K \le Z(G)$ and $A/K$ is abelian. Consider the set
\begin{equation*}
  \mathcal{A}
  =
  \Set{
    \psi \in \End(G/K)
    :
    \lexpp{G/K}{\psi}\le A/K
  },
\end{equation*}
which is a ring under the usual operations. Given $\psi \in \mathcal{A}$,
let $\lift{\psi}$ be a lifting of 
${\psi}$ to $G$, that is, a map on $G$ such that
\begin{equation*}
  (\lexp{g}{\lift{\psi}}) K
  =
  \lexpp{gK}{{\psi}}
\end{equation*}
for $g \in  G$.  Such  a  $\lift{\psi}$ need  not be  unique, nor  an
endomorphism of $G$.

We could  try and  extend Koch's iterative  construction by  using the
$\psi$  and $\lift{\psi}$,  possibly employing  different elements  of
$\mathcal{A}$ at each step. However, in doing so certain bilinear maps
occur  naturally,  in   the  form  of  products   of  suitable  central
commutators.   We   thus  incorporate  these  bilinear   maps  in  our
construction,  by   introducing,  on  the  model   of  a  construction
in~\cite{p4}, the set
\begin{equation*}
  \mathcal{B}
  =
  \Set{
    \alpha\colon G\times G\to K
    :
    \text{$\alpha$ is bilinear, and $\alpha(K,G) = \alpha(G,K) = 1$}
  }.
\end{equation*}

Given $(\psi, \alpha) \in \mathcal{A} \times \mathcal{B}$, we can now
define on $G$ the operation 
\begin{equation*}
  g\circ_{\psi,\alpha} h
  =
  g
  \cdot
  \lexp{g}{\lift{\psi}} \cdot h \cdot \lexpq{g}{\lift{\psi}}^{-1}
  \cdot
  \alpha(g,h),
\end{equation*}
which does not depend on the  choice of the particular
lifting $\lift{\psi}$ of $\psi$.

In Theorem~\ref{thm:maintheorem} we show that 
\begin{equation*}
  (G, \circ_{\psi, \alpha}, \circ_{\phi, \beta})
\end{equation*}
is  a  skew brace  for  all  $({\psi},  \alpha), ({\phi},  \beta)  \in
\mathcal{A}  \times  \mathcal{B}$.   In other  words,  the  operations
``$\circ_{\psi, \alpha}$''  form a brace block,  for $({\psi}, \alpha)
\in    \mathcal{A}   \times    \mathcal{B}$.    Koch's    construction
in~\cite{Koch-Abelian} occurs  here by specialising  $K = 1$  (so that
both $\alpha$ and $\beta$ are the trivial bilinear map), replacing $\phi$ with
$g \mapsto \lexp{g}{\phi}^{-1}$, and  taking $\psi = \Zero$ to be
the endomorphism of $G$ mapping every element $g \in G$ to $1$. 

In  Theorem~\ref{thm:main2}  we  also  show  that,  given  a  sequence
$(\psi_{n},  \alpha_{n})  \in  \mathcal{A}  \times  \mathcal{B}$,  the
operations ``$\circ_{\psi_{n}, \alpha_{n}}$'' can be iterated,
yielding  operations  of  the  same form,
thereby extending Koch's construction of brace blocks in~\cite{Koc22}.
As  mentioned above,  it  is in  this step  that  our introduction  of
bilinear  maps proves  to be  necessary, as such
maps occur  naturally in  the iterative steps  as products  of central
commutators    (see    Theorem~\ref{thm:main2}~and    the    following
Remark~\ref{rem:the-reason-why}).

Perhaps it could be noted that, given $(\psi, \alpha), (\phi, \beta)
\in \mathcal{A} \times \mathcal{B}$, when doing our construction
first with $(\psi, \alpha)$ and then with
$(\phi, \beta)$, we obtain an operation $\circ_{(\xi, \gamma)}$, for a
certain $(\xi, \gamma) \in \mathcal{A} \times \mathcal{B}$, where
$\xi = \psi + \phi + \psi \phi$ is the Jacobson circle of $\psi$
and $\phi$ (see Theorem~\ref{thm:main2}). 

We  provide  several  examples  of   our  construction  at  work.

In
particular,  Example~\ref{ex:CG} shows that we can obtain brace blocks 
of any given cardinality consisting of distinct operations, and 
answers  a  question  posed to  us  by
Cornelius  Greither at  the  Conference on  Hopf  algebras and  Galois
module theory held (online) in Omaha  in May 2021.  Namely,
we exhibit  a sequence
of distinct (topological) group  operations on the $p$-adic Heisenberg
group $(G, \cdot)$ such that $G$ is  a skew brace with respect to any
two  of them,  and the  sequence converges  to the  original operation
``$\cdot$''.

The  interplay between  set-theoretic  solutions  of the  Yang--Baxter
equation of Mathematical Physics,  (skew) braces, regular subgroups of
the holomorph, and Hopf--Galois  structures has spawned a considerable
body of  literature in  recent years (see,  for example,~\cite{braces,
  skew, CJO, GP, Byo96, SV2018}).

We  discuss  some  preliminaries   about  skew  braces  in
  Section~\ref{sec:preliminaries}.    In  Section~\ref{sec:main}   we
describe    our    construction,    which    we    then    apply    in
Section~\ref{sec:iterating}  to  the  extension  of  Koch's  iterative
approach   mentioned   above.    Section~\ref{sec:examples}   contains
examples.   We discuss  set-theoretic  solutions  of the  Yang--Baxter
equation  in  Section~\ref{sec:YB}.   In  Section~\ref{sec:graphs}  we
rephrase  brace  blocks  in  terms of  the \emph{normalising  graph},
exploiting the  connections between regular  subgroups and
  skew braces.

We are very grateful to Cornelius Greither for the suggestion that led to
Example~\ref{ex:CG}, and to the referee for a number of useful suggestions.

\section{Preliminaries}
\label{sec:preliminaries}

In what follows, if $(G,\cdot)$ is  a group, we write $\End(G, \cdot)$
for the monoid of endomorphisms of  $(G, \cdot)$. We denote the action
of  $\psi  \in  \End(G, \cdot)$  on  $g  \in  G$  by a  left  exponent
$\lexp{g}{\psi}$,   and    thus   we   compose    such   endomorphisms
right-to-left.    If    $\epsilon\in\Z$,   then   we    have   clearly
$\lexp{(g^{\epsilon})}{\psi}  = (\lexp{g}{\psi})^{\epsilon}$;  thus we
write     simply    $\lexp{g}{\psi}^{\epsilon}$     for    such     an
element. 

We recall here the definitions of skew braces, bi-skew braces, and brace blocks, 
given in~\cite{skew},~\cite{Childs-bi-skew}, and~\cite{Koc22} respectively. (We slightly generalise the last definition, in order to possibly admit brace blocks with
arbitrary cardinality.)

Let $(G,\cdot)$  be a group with  identity $1$. We denote  by $g^{-1}$
the inverse of an element $g\in G$ with respect to ``$\cdot$''.

\begin{definition}
  A  \emph{skew (left)  brace}  is a  triple $(G,\cdot,\circ)$,  where
  $(G,\cdot)$ and $(G,\circ)$ are groups, and for all $g,h,k\in G$,
  \begin{equation*}
    g\circ(h\cdot k) = (g\circ h)\cdot g^{-1}\cdot (g\circ k).
  \end{equation*}
\end{definition}
It is immediate to check that $(G,\cdot)$ and
$(G,\circ)$  share  the   same  identity  $1$; see~\cite[Lemma 1.7]{skew}. 

\begin{definition}
  A \emph{bi-skew  brace} is  a triple  $(G,\cdot,\circ)$, where
  both $(G,\cdot,\circ)$ and $(G,\circ,\cdot)$ are skew braces.
\end{definition}

\begin{definition}
  Let $G$ be a non-empty set. A \emph{brace block} on $G$ is a family
  $\mathcal{F}$ of group operations on $G$ such that $(G, \circ,
  \diamond)$ is a (bi-)skew brace for all $\circ,
  \diamond \in \mathcal{F}$.
\end{definition}

Also, we  write $\iota$  for the homomorphism  mapping $g\in  G$ to
conjugation by $g$:
\begin{align*}
	\iota\colon (G,\cdot)&\to \Aut(G,\cdot)\\
	g&\mapsto (x\mapsto g\cdot x\cdot g^{-1}).
\end{align*} 

\section{The main construction}
\label{sec:main}

Let $(G,\cdot)$ be a group, let $K$  be a subgroup of $G$ contained in
the centre $Z(G)$ of  $G$, and let $A$ be a subgroup  of $G$ such that
$A/K$ is abelian.

Define
\begin{equation*}
  \mathcal{A}
  =
  \Set{
    \wasoverline{\psi}\in\End(G/K)
    :
    \lexpp{G/K}{\wasoverline{\psi}}\le A/K
  }.
\end{equation*}
The set $\mathcal{A}$  is a  ring under the usual  operations, with
unity only when $A = G$.

Given $\wasoverline{\psi}\in \mathcal{A}$,  define $\lift{\psi}$ to be
a  lifting  of  $\wasoverline{\psi}$,  that is,  a  set-theoretic  map
$\lift{\psi}  \colon G  \to  A$  such that  the  following diagram  is
commutative:
\begin{equation*}
  \begin{tikzcd}
    G \arrow[r, "\lift{\psi}"] \arrow[d] & A \arrow[d] \\
    G/K \arrow[r, "\wasoverline{\psi}"']          & A/K          
  \end{tikzcd}
\end{equation*}
This is equivalent to saying that for all $g\in G$ we have
\begin{equation*}
  (\lexp{g}{\lift{\psi}}) K
  =
  \lexpp{g K}{\wasoverline{\psi}}.
\end{equation*}
Liftings are clearly not unique if $K \ne 1$, and need not be
endomorphisms of $G$.

Given a lifting $\lift{\psi}$ of $\psi \in \mathcal{A}$, we have for  $g,h\in G$ that
\begin{align*}
  \lexpp{g\cdot h}{\lift{\psi}}K
  &=
  \lexpp{(g\cdot h)K}{\wasoverline{\psi}}
  =
  \lexpp{g K}{\wasoverline{\psi}}\cdot \lexpp{h K}{\wasoverline{\psi}}
  \\&=
  (\lexp{g}{\lift{\psi}}) K\cdot (\lexp{h}{\lift{\psi}}) K
  =
  (\lexp{g}{\lift{\psi}}\cdot\lexp{h}{\lift{\psi}})K,
\end{align*}
that is,
\begin{equation*}
  \lexpp{g\cdot h}{\lift{\psi}}
  \equiv
  \lexp{g}{\lift{\psi}}\cdot\lexp{h}{\lift{\psi}}
  \pmod{K}.
\end{equation*}
Similarly, as $A/K$ is abelian, we obtain
\begin{align*}
  \lexp{g}{\lift{\psi}}\cdot\lexp{h}{\lift{\psi}}
  \equiv
  \lexp{h}{\lift{\psi}}\cdot\lexp{g}{\lift{\psi}}
  \pmod{K};
\end{align*}
since $K\le Z(G,\cdot)$, this yields
\begin{equation}
  \label{eq:iota}
  \iota(\lexpp{g\cdot h}{\lift{\psi}})
  =
  \iota(\lexp{g}{\lift{\psi}}\cdot\lexp{h}{\lift{\psi}})
  =
  \iota(\lexp{h}{\lift{\psi}}\cdot\lexp{g}{\lift{\psi}}).
\end{equation}

\begin{remark}
  Let    $\wasoverline{\psi},\wasoverline{\phi}\in\mathcal{A}$    have
  liftings $\lift{\psi}$  and  $\lift{\phi}$, respectively.  Then the
  map $\lift{\psi} + \lift{\phi} \colon G \to A$ defined by
  \begin{equation*}
    \lexp{g}{\lift{\psi}+\lift{\phi}}
    =
    \lexp{g}{\lift{\psi}}\cdot \lexp{g}{\lift{\phi}}
  \end{equation*}
  is   a  lifting   of  $\wasoverline{\psi}+\wasoverline\phi$,   and  the   map
  $\lift{\psi}\lift{\phi}\colon G\to A$ defined by
  \begin{equation*}
    \lexp{g}{\lift{\psi}\lift{\phi}}
    =
    \lexpp{\lexp{g}{\lift{\phi}}}{{\lift{\psi}}}
  \end{equation*}
  is a lifting of $\wasoverline{\psi}\wasoverline\phi$.
\end{remark}

Now consider 
\begin{equation*}
  \mathcal{B}
  =
  \Set{
    \alpha\colon G\times G\to K
    :
    \text{$\alpha$ is bilinear, and $\alpha(K,G)=\alpha(G,K)=1$}
  }.
\end{equation*}
(Similar maps were used in a related  context in~\cite{p4}.)  If
$(\wasoverline\psi,\alpha)\in\mathcal{A}\times    \mathcal{B}$    and
$\lift{\psi}$  is  a  lifting  of  $\wasoverline\psi$, then we  define  an
operation on $G$ via
\begin{equation*}
  g\circ_{\psi,\alpha} h
  =
  g\cdot \lexp{g}{\lift{\psi}}\cdot h\cdot
  (\lexp{g}{\lift{\psi}})^{-1}\cdot \alpha(g,h) 
  =
  g\cdot \lexp{h}{\iota(\lexp{g}{\lift{\psi}})}\cdot \alpha(g,h).
\end{equation*}
Note   that   the   operation  ``$\circ_{\psi,\alpha}$''   is   indeed
independent of the  particular choice of the  lifting $\lift{\psi}$ of
$\psi$, as if $\psi^{\uparrow\uparrow}$  is another lifting of $\psi$,
we            have            $\lexp{g}{\lift{\psi}}            \equiv
\lexp{g}{\psi^{\uparrow\uparrow}}  \pmod{K}$ for  $g \in  G$, so  that
$\iota(\lexp{g}{\lift{\psi}})                                        =
\iota(\lexp{g}{\psi^{\uparrow\uparrow}})$, as $K \le Z(G,\cdot)$.

We claim  that $(G,\cdot,\circ_{\psi,\alpha})$ is a  bi-skew brace.  A
sharper statement actually holds.
\begin{theorem}\label{thm:maintheorem}
  For $(\phi,\beta), (\psi,\alpha) \in \mathcal{A} \times \mathcal{B}$
  we have that
  \begin{equation*}
    (G,\circ_{\phi,\beta},\circ_{\psi,\alpha})
  \end{equation*}
  is a bi-skew brace.

  In other words, the family $(\circ_{\phi, \beta} : (\phi, \beta)  \in
  \mathcal{A} \times \mathcal{B})$ is a brace block.
\end{theorem}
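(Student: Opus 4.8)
The plan is to prove the stronger uniform statement that for \emph{every} ordered pair $(\phi,\beta),(\psi,\alpha)\in\mathcal{A}\times\mathcal{B}$ the triple $(G,\circ_{\phi,\beta},\circ_{\psi,\alpha})$ is a skew brace; applying this with the two pairs interchanged then shows that $(G,\circ_{\psi,\alpha},\circ_{\phi,\beta})$ is a skew brace as well, so each such triple is automatically a bi-skew brace and the whole family is a brace block. Throughout I abbreviate $\circ=\circ_{\psi,\alpha}$, $\bullet=\circ_{\phi,\beta}$, and set $\gamma_g=\iota(\lexp{g}{\lift{\psi}})$ and $\delta_g=\iota(\lexp{g}{\lift{\phi}})$, so that $g\circ h=g\cdot\lexp{h}{\gamma_g}\cdot\alpha(g,h)$ and $g\bullet h=g\cdot\lexp{h}{\delta_g}\cdot\beta(g,h)$, the conjugations $\gamma_g,\delta_g$ being independent of the chosen liftings.

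First I would record the structural facts on which everything rests. Since $\lexp{g}{\lift{\psi}},\lexp{g}{\lift{\phi}}\in A$ and $K\le Z(G)\cap A$ with $A/K$ abelian, the subgroup $\iota(A)\cong A/(A\cap Z(G))$ of $\Inn(G,\cdot)$ is abelian; hence all the automorphisms $\gamma_g$ and $\delta_h$ lie in this one abelian group and commute with one another. Formula~\eqref{eq:iota} says precisely that $\gamma$ and $\delta$ are homomorphisms $(G,\cdot)\to\iota(A)$, i.e. $\gamma_{g\cdot h}=\gamma_g\gamma_h$. Next I would prove the invariance identity $\gamma_{\lexp{h}{\delta_g}}=\gamma_h$ (and its analogue with $\gamma,\delta$ interchanged): reducing modulo $K$, the element $\lexp{h}{\delta_g}$ becomes the $(A/K)$-conjugate $\lexp{gK}{\phi}\cdot hK\cdot(\lexp{gK}{\phi})^{-1}$, and applying $\psi$ and using that $\lexp{(\lexp{gK}{\phi})}{\psi}$ and $\lexp{hK}{\psi}$ both lie in the \emph{abelian} group $A/K$ collapses the conjugation. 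Together with the homomorphism property this yields the crucial identities $\gamma_{g\circ h}=\gamma_{g\bullet h}=\gamma_g\gamma_h$. In the same spirit, because $\alpha$ and $\beta$ factor through $(G/K)\times(G/K)$ and are bilinear with central values, I obtain the transfer rules $\alpha(g\bullet h,k)=\alpha(g,k)\alpha(h,k)$ and $\alpha(g,h\bullet k)=\alpha(g,h)\alpha(g,k)$; and commutativity of $\iota(A)$ gives $\delta_{\overline g}=\delta_g^{-1}$ for the $\bullet$-inverse $\overline g$ of $g$.

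With these in hand I would check that each $\circ_{\psi,\alpha}$ is a group operation. The element $1$ is a two-sided identity, since $\lexp{1}{\lift{\psi}}\in K$ is central (so $\gamma_1=\id$) and $\alpha(1,G)=\alpha(G,1)=1$. For associativity one expands $(g\circ h)\circ k$ and $g\circ(h\circ k)$: the central cocycle contributions collect on both sides to the same product $\alpha(g,h)\alpha(g,k)\alpha(h,k)$ by the transfer rules, while the main terms agree because $\gamma_{g\circ h}=\gamma_g\gamma_h$ together with the right-to-left composition convention gives $\lexp{k}{\gamma_{g\circ h}}=\lexp{k}{\gamma_g\gamma_h}=\lexp{(\lexp{k}{\gamma_h})}{\gamma_g}$, which is exactly the term produced on the other side (this step uses only the homomorphism identity, not commutativity). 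Finally, each left and right translation of $\circ$ is a bijection of $G$: modulo $K$ a translation is a conjugation followed by a translation of $G/K$, hence bijective there, and the residual central ambiguity is removed uniquely using $\alpha(G,K)=1$ and centrality of $K$. An associative operation with a two-sided identity and bijective translations is a group, so $(G,\circ_{\psi,\alpha})$ is a group, and likewise $(G,\circ_{\phi,\beta})$.

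It then remains to verify the single skew-brace axiom $g\circ(h\bullet k)=(g\circ h)\bullet\overline g\bullet(g\circ k)$. I would do this through the equivalent reformulation that the map $\lambda_g\colon x\mapsto\overline g\bullet(g\circ x)$ is an endomorphism of $(G,\bullet)$ for every $g$, since the axiom is literally the statement $\lambda_g(h\bullet k)=\lambda_g(h)\bullet\lambda_g(k)$. Using $\delta_{\overline g}=\delta_g^{-1}$ and $\gamma_{g\bullet h}=\gamma_g\gamma_h$ one computes $\lambda_g$ explicitly and checks additivity; here commutativity of $\iota(A)$ is used repeatedly to interchange $\gamma_g$ with $\delta_h$ (for instance $\lexp{(\lexp{k}{\delta_h})}{\gamma_g}=\lexp{(\lexp{k}{\gamma_g})}{\delta_h}$), and the transfer rules reduce every $\alpha$- and $\beta$-contribution to products of the ``pure'' values $\alpha(g,\cdot)$ and $\beta(\cdot,\cdot)$ that match on the two sides. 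I expect the main obstacle to be exactly this bookkeeping: tracking the central correction terms arising from $\alpha$ and $\beta$ (and from the non-uniqueness hidden in $\overline g$) and confirming that, after systematically exploiting that all the $\gamma$'s and $\delta$'s commute inside $\iota(A)$, the two sides coincide. Everything else is either structural or a routine consequence of~\eqref{eq:iota}.
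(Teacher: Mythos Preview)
Your plan is correct and will go through. The paper takes the most direct route: it writes down the explicit $\circ_{\phi,\beta}$-inverse $\overline{g}=(\lexp{g}{\lift{\phi}})^{-1}\cdot g^{-1}\cdot\lexp{g}{\lift{\phi}}\cdot\beta(g,g)$ and then simply expands both sides of the skew-brace identity $(g\circ_{\psi,\alpha} h)\circ_{\phi,\beta}\overline{g}\circ_{\phi,\beta}(g\circ_{\psi,\alpha} k)=g\circ_{\psi,\alpha}(h\circ_{\phi,\beta} k)$ in terms of~``$\cdot$'', simplifying step by step using~\eqref{eq:iota} and the commutativity of $A/K$ as it goes. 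You instead isolate the structural content up front --- that $\gamma,\delta\colon(G,\cdot)\to\iota(A)$ are homomorphisms into an \emph{abelian} subgroup of $\Inn(G)$, that they are invariant under each other's conjugation action, and that $\alpha,\beta$ obey transfer rules with respect to every $\circ$-operation (this last observation is exactly the paper's Lemma~\ref{lem_weiterate2}) --- and then verify the axiom through the equivalent $\lambda_g$-endomorphism reformulation. The underlying manipulations are the same (your anticipated ``bookkeeping'' is precisely the paper's long display), but your packaging explains \emph{why} the cancellations occur: everything happens inside the abelian group $\iota(A)$, and the bilinear maps are blind to conjugation by elements of $A$. The paper's approach buys self-containment with no preliminary lemmas; yours buys conceptual clarity and reusability of the structural facts, several of which the paper only proves later for the iteration in Section~\ref{sec:iterating}.
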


Before going into the proof, we record the following  useful fact,
which is  a slight generalisation of~\eqref{eq:iota}:
\begin{equation*}
  \iota(\lexp{h}{\lift{\phi}}\cdot \lexp{g}{\lift{\psi}})
  =
  \iota(\lexp{g}{\lift{\psi}}\cdot \lexp{h}{\lift{\phi}}).
\end{equation*}
In fact we have
\begin{equation*}
  (\lexp{h}{\lift{\phi}}\cdot \lexp{g}{\lift{\psi}})K
  =
  (\lexp{h}{\lift{\phi}}) K\cdot (\lexp{g}{\lift{\psi}}) K
  =
  (\lexp{g}{\lift{\psi}}) K\cdot  (\lexp{h}{\lift{\phi}}) K
  =
  (\lexp{g}{\lift{\psi}}\cdot \lexp{h}{\lift{\phi}})K,
\end{equation*}
as $A / K$ is abelian.

\begin{proof}
  Let $\lift{\phi}$ and $\lift{\psi}$ be liftings of $\phi$ and
  $\psi$, respectively.
  
It is not difficult to verify that $(G,\circ_{\phi,\beta})$ is indeed
a group. Associativity follows from the fact that
\begin{equation*}
  (g \circ_{\phi, \beta} h) \circ_{\phi, \beta} k
  \quad\text{and}\quad
  g \circ_{\phi, \beta} (h \circ_{\phi, \beta} k)
\end{equation*}
both expand to
\begin{equation*}
  g \cdot \lexp{g}{\lift{\phi}} \cdot
  h \cdot \lexp{h}{\lift{\phi}} \cdot
  k \cdot \lexpq{h}{\lift{\phi}}^{-1} \cdot
  \lexpq{g}{\lift{\phi}}^{-1} \cdot
  \beta(g, h) \cdot \beta(g, k) \cdot \beta(h, k).
\end{equation*}

The identity is $1$ and the inverse of $g$ is
\begin{equation*}
  \overline{g}
  =
  \lexpq{g}{\lift\phi}^{-1}
  \cdot
  g^{-1}\cdot \lexp{g}{\lift\phi}
  \cdot
  \beta(g,g).
\end{equation*}

To show that $(G,\circ_{\phi,\beta},\circ_{\psi,\alpha})$ is a skew
brace,  we need to verify that  for all $g,h,k\in G$ we have
  \begin{equation*}
    (g\circ_{\psi,\alpha} h)
    \circ_{\phi,\beta}
    \overline{g}
    \circ_{\phi,\beta}
    (g\circ_{\psi,\alpha} k)
  \end{equation*}
  equals 
  \begin{equation*}
    g
    \circ_{\psi,\alpha}
    (h\circ_{\phi,\beta}k).
  \end{equation*}
  We have
  \begin{align*}
    &\overline{g}\circ_{\phi,\beta}(g\circ_{\psi,\alpha} k)=\overline{g}\circ_{\phi,\beta}(g\cdot
    \lexp{k}{\iota(\lexp{g}{\lift{\psi}})}\cdot
    \alpha(g,k))\\
    &\qquad= (\lexp{g}{\lift{\phi}})^{-1}\cdot g^{-1}\cdot
    \lexp{g}{\lift{\phi}}\cdot \beta(g,g)\cdot \lexpp{g\cdot
      \lexp{k}{\iota(\lexp{g}{\lift{\psi}})}\cdot
      \alpha(g,k)}{\iota(\lexpp{g^{-1}}{\lift{\phi}})}\\&\qquad \quad\cdot
    \beta(g^{-1},g)\cdot \beta(g^{-1},k)
    \\
    &\qquad=
    (\lexp{g}{\lift{\phi}})^{-1}\cdot g^{-1} \cdot
    \lexpp{g\cdot                 \lexp{k}{\iota(\lexp{g}{\lift{\psi}})}}{\iota(\lexp{g}{\lift{\phi}}\cdot\lexpp{g^{-1}}{\lift{\phi}})}\cdot \lexp{g}{\lift{\phi}}\cdot \beta(g^{-1},k)\cdot
    \alpha(g,k)\\
    &\qquad =(\lexp{g}{\lift{\phi}})^{-1}\cdot   \lexp{k}{\iota(\lexp{g}{\lift{\psi}})}\cdot \lexp{g}{\lift{\phi}}\cdot \beta(g^{-1},k)\cdot
    \alpha(g,k)
  \end{align*}
  and 
  \begin{align*}
    &(g\circ_{\psi,\alpha} h)\circ_{\phi,\beta}\overline{g}\circ_{\phi,\beta} (g\circ_{\psi,\alpha} k)\\
    &\qquad= g\cdot \lexp{g}{\lift{\psi}}\cdot h\cdot(\lexp{g}{\lift{\psi}})^{-1}\cdot \alpha(g,h)\\
    &\qquad\quad\cdot \lexpp{(\lexp{g}{\lift{\phi}})^{-1}\cdot   \lexp{g}{\lift{\psi}}\cdot k\cdot(\lexp{g}{\lift{\psi}})^{-1}            \cdot \lexp{g}{\lift{\phi}}\cdot \beta(g^{-1},k)\cdot
      \alpha(g,k)}{\iota(\lexpp{g\cdot h}{\lift{\phi}})}\\
    &\qquad\quad\cdot \beta(g, k)\cdot \beta(h, k)\\
    &\qquad= g\cdot \lexp{g}{\lift{\psi}}\cdot h\cdot(\lexp{g}{\lift{\psi}})^{-1}\cdot\lexpp{(\lexp{g}{\lift{\phi}})^{-1}\cdot   \lexp{g}{\lift{\psi}}\cdot k\cdot(\lexp{g}{\lift{\psi}})^{-1}            \cdot \lexp{g}{\lift{\phi}}}{\iota(\lexp{h}{\lift{\phi}}\cdot\lexp{g}{\lift{\phi}})} \\
    &\qquad\quad \cdot\beta(h,k)\cdot \alpha(g,h) \cdot
      \alpha(g,k)\\
    &\qquad=g\cdot \lexp{g}{\lift{\psi}}\cdot h\cdot(\lexp{g}{\lift{\psi}})^{-1}\cdot \lexp{h}{\lift{\phi}}\cdot
     \lexp{g}{\lift{\psi}}\cdot k\cdot(\lexp{g}{\lift{\psi}})^{-1}  \cdot(\lexp{h}{\lift{\phi}})^{-1}\\
    &\qquad\quad \cdot  \beta(h,k)\cdot \alpha(g,h) \cdot
      \alpha(g,k)\\
     &\qquad=g\cdot \lexp{g}{\lift{\psi}}\cdot h\cdot(\lexp{g}{\lift{\psi}})^{-1}\cdot \lexp{k}{\iota(\lexp{h}{\lift{\phi}}\cdot \lexp{g}{\lift{\psi}})} \cdot \beta(h,k)\cdot \alpha(g,h) \cdot
      \alpha(g,k)\\
    &\qquad=g\cdot \lexp{g}{\lift{\psi}}\cdot h\cdot(\lexp{g}{\lift{\psi}})^{-1}\cdot \lexp{k}{\iota(\lexp{g}{\lift{\psi}}\cdot\lexp{h}{\lift{\phi}})} \cdot \beta(h,k)\cdot \alpha(g,h) \cdot
      \alpha(g,k)\\
    &\qquad=g\cdot \lexp{g}{\lift{\psi}}\cdot h\cdot\lexp{h}{\lift{\phi}}\cdot k\cdot (\lexp{h}{\lift{\phi}})^{-1}\cdot  \beta(h, k)\cdot(\lexp{g}{\lift{\psi}})^{-1}\cdot \alpha(g,h)\cdot\alpha(g,k)\\
    &\qquad=g\circ_{\psi,\alpha}(h\cdot\lexp{h}{\lift{\phi}}\cdot k\cdot (\lexp{h}{\lift{\phi}})^{-1}\cdot  \beta(h, k)) \\
    &\qquad=g\circ_{\psi,\alpha}(h\circ_{\phi,\beta}k),
  \end{align*}
  as claimed. 
\end{proof}

\begin{remark}
  Specialising $\phi(xK)=K$  and $\beta(x,y)=1$ for all  $x,y\in G$ in
  Theorem~\ref{thm:maintheorem}, so that $g  \circ_{\phi, \beta} h = g
  \cdot h$, we obtain that $(G,\cdot,\circ_{\psi,\alpha})$ is
  a bi-skew brace.
\end{remark}

\section{Iterating the construction}
\label{sec:iterating}

Let  $(\wasoverline\psi,\alpha)\in \mathcal{A}\times\mathcal{B}$,  and
let $\lift{\psi}$  be a lifting of  $\wasoverline\psi$.  Write ``$\circ$''
for ``$\circ_{\psi,\alpha}$''.
Then $(G,\circ)$ is  a group and $(K,\circ)$ and  $(A,\circ)$ are both
subgroups of $(G,\circ)$. (Note in particular that ``$\circ$'' and ``$\cdot$''
coincide   on   $K$,   as   $(K,\cdot)\le   Z(G,\cdot)$.)    Moreover,
$\lexp{K}{\lift{\psi}}\le K$, as
\begin{equation*}
  (\lexp{k}{\lift{\psi}}) K
  =
  \lexpp{k K}{\wasoverline\psi}
  =
  \lexp{K}{\wasoverline\psi}=K.
\end{equation*}
We  obtain that  $K\le  Z(G,\circ)$, and  since $(A/K,\circ)$  remains
abelian, we can consider the ring
\begin{equation*}
  \mathcal{A}_{\circ}
  =
  \Set{
    \wasoverline{\phi}\in\End(G/K,\circ)
    :
    \lexp{G/K}{\wasoverline\phi}\le A/K
  }.
\end{equation*}
Note that the bilinear map $\alpha$ does  not play a role in the group
operation of $G/K$, as the image  of $\alpha$ is contained in $K$.  We
claim  that $\mathcal{A}\subseteq  \mathcal{A}_{\circ}$. We  prove
this result in a more general setting.

\begin{lemma}
  \label{lem_weiterate}
  Let $B$ be a group, and let $C \le B$ be an abelian subgroup.

  Consider the ring $R$ of endomorphisms  of $B$ with image in $C$. If
  $\psi, \phi  \in R$,  then $\phi$  is also  an endomorphism  of $(B,
  \circ_{\psi})$, where
  \begin{equation*}
    x \circ_{\psi} y
    =
    x\cdot \lexp{x}{{\psi}}\cdot y\cdot (\lexp{x}{{\psi}})^{-1}.
  \end{equation*}
\end{lemma}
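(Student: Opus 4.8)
The plan is to check the single identity that expresses that $\phi$ respects the operation $\circ_{\psi}$, namely
\[
  \lexpp{x \circ_{\psi} y}{\phi}
  =
  \lexp{x}{\phi} \circ_{\psi} \lexp{y}{\phi}
  \qquad \text{for all } x, y \in B.
\]
Since $\phi \colon B \to B$ is already a map of the underlying set, this homomorphism property is all that needs verifying.

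First I would expand the left-hand side. As $\phi$ is an endomorphism of $(B, \cdot)$ and $x \circ_{\psi} y = x \cdot \lexp{x}{\psi} \cdot y \cdot (\lexp{x}{\psi})^{-1}$, applying $\phi$ gives
\[
  \lexpp{x \circ_{\psi} y}{\phi}
  =
  \lexp{x}{\phi} \cdot \lexpp{\lexp{x}{\psi}}{\phi} \cdot \lexp{y}{\phi} \cdot (\lexpp{\lexp{x}{\psi}}{\phi})^{-1},
\]
where $\lexpp{\lexp{x}{\psi}}{\phi} = \lexp{x}{\phi\psi}$ is the image of $x$ under the composite $\phi\psi$. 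Next I would expand the right-hand side directly from the definition of $\circ_{\psi}$:
\[
  \lexp{x}{\phi} \circ_{\psi} \lexp{y}{\phi}
  =
  \lexp{x}{\phi} \cdot \lexpp{\lexp{x}{\phi}}{\psi} \cdot \lexp{y}{\phi} \cdot (\lexpp{\lexp{x}{\phi}}{\psi})^{-1},
\]
where now $\lexpp{\lexp{x}{\phi}}{\psi} = \lexp{x}{\psi\phi}$ is the image of $x$ under $\psi\phi$.

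The two expressions differ only in the conjugating element: $\lexp{x}{\phi\psi}$ on the left versus $\lexp{x}{\psi\phi}$ on the right. In general these are distinct, since $\phi$ and $\psi$ need not commute, so a naive term-by-term comparison fails. The observation that saves the day is that all three relevant elements --- $\lexp{x}{\phi\psi}$, $\lexp{x}{\psi\phi}$, and the conjugated element $\lexp{y}{\phi}$ --- lie in the common image $C$, which is abelian by hypothesis. Hence conjugation of $\lexp{y}{\phi}$ by either $\lexp{x}{\phi\psi}$ or $\lexp{x}{\psi\phi}$ is trivial, and both sides collapse to the single value $\lexp{x}{\phi} \cdot \lexp{y}{\phi}$, establishing the identity.

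The computation is otherwise routine; the one point that genuinely requires attention --- indeed the whole content of the lemma --- is recognising that the failure of $\phi$ and $\psi$ to commute is immaterial here, because the discrepancy between $\phi\psi$ and $\psi\phi$ enters only through a conjugation carried out inside the abelian subgroup $C$, where it disappears.
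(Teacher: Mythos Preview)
Your proof is correct and follows essentially the same approach as the paper: expand both sides and observe that each collapses to $\lexp{x}{\phi}\cdot\lexp{y}{\phi}$ because the conjugation takes place entirely within the abelian subgroup $C$. Your emphasis on the possible failure of $\phi$ and $\psi$ to commute is a slight distraction --- the point is simply that conjugation in $C$ is trivial, regardless of whether $\phi\psi=\psi\phi$ --- but this does not affect correctness.
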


\begin{proof}
  For all $x,y\in B$ we have, as $C$ is abelian,
  \begin{equation*}
    \lexpp{x\circ_{\psi}y}{\phi}
    =
    \lexpp{x\cdot \lexp{x}{{\psi}}\cdot y\cdot
      (\lexp{x}{{\psi}})^{-1}}{\phi}
    =
    \lexp{x}{\phi}\cdot \lexpp{\lexp{x}{{\psi}}}{\phi}\cdot
    \lexp{y}{\phi}\cdot \lexpp{\lexp{x}{{\psi}}}{\phi}^{-1}
    =
    \lexp{x}{\phi}\cdot \lexp{y}{\phi}
  \end{equation*}
  and
  \begin{equation*}
    \lexp{x}{\phi}\circ_{\psi}\lexp{y}{\phi}
    =
    \lexp{x}{\phi}\cdot \lexpp{\lexp{x}{\phi}}{{\psi}}\cdot \lexp{y}{\phi}\cdot \lexpp{\lexp{x}{\phi}}{{\psi}}^{-1}
    =
    \lexp{x}{\phi}\cdot \lexp{y}{\phi}.\qedhere
  \end{equation*}
\end{proof}

Now consider 
\begin{align*}
  \mathcal{B}_{\circ}
    =
    \{&\beta\colon G\times G\to K
    :
    \text{$\beta$ is bilinear with respect to $\circ$, and}\\
    & \beta(K,G) = \beta(G,K) = 1
    \}.
\end{align*}
\begin{lemma}\label{lem_weiterate2}
  The inclusion $\mathcal{B}\subseteq\mathcal{B}_{\circ}$ holds.
\end{lemma}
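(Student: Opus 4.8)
The plan is to take an arbitrary $\beta \in \mathcal{B}$ and verify the two defining conditions of $\mathcal{B}_{\circ}$, namely bilinearity with respect to $\circ := \circ_{\psi,\alpha}$ and the vanishing $\beta(K,G) = \beta(G,K) = 1$. The vanishing conditions transfer verbatim, since $K$ is literally the same subset and, as observed just before the lemma, ``$\circ$'' and ``$\cdot$'' agree on $K$, so the product in the target group $(K,\circ) = (K,\cdot)$ is unchanged. Hence the only real content is to upgrade $\cdot$-bilinearity to $\circ$-bilinearity.

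For the first variable I would expand $g \circ g' = g \cdot \lexp{g}{\lift\psi} \cdot g' \cdot (\lexp{g}{\lift\psi})^{-1} \cdot \alpha(g,g')$ and apply the $\cdot$-bilinearity of $\beta$ in the left slot, splitting $\beta(g \circ g', h)$ into a product of five factors. Two simplifications then occur: the factors coming from $\lexp{g}{\lift\psi}$ and from $(\lexp{g}{\lift\psi})^{-1}$ multiply to $\beta(\lexp{g}{\lift\psi} \cdot (\lexp{g}{\lift\psi})^{-1}, h) = \beta(1,h) = 1$, and the factor $\beta(\alpha(g,g'), h)$ is trivial because $\alpha(g,g') \in K$ and $\beta(K,G) = 1$. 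What survives is exactly $\beta(g,h) \cdot \beta(g',h)$, as required. The computation for the second variable is symmetric: expanding $h \circ h'$ and using $\cdot$-bilinearity in the right slot, the $\lexp{h}{\lift\psi}$ term cancels against its inverse and the $\alpha(h,h')$ contribution dies because $\beta(G,K) = 1$, leaving $\beta(g,h) \cdot \beta(g,h')$.

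I do not anticipate a genuine obstacle here; this is a short direct verification rather than a structural argument. The only point that warrants care is conceptual: the conjugating factors $\lexp{g}{\lift\psi}$ introduced by ``$\circ$'' are annihilated by $\beta$ only after being paired with their inverses, so it is essential that in $g \circ g'$ they occur in the combination $\lexp{g}{\lift\psi} \cdots (\lexp{g}{\lift\psi})^{-1}$; and the bilinear correction $\alpha$ contributes nothing precisely because its image lies in $K$. In this sense the two hypotheses defining $\mathcal{B}$ — that the image lies in $K$, and that $\beta$ vanishes on $K \times G$ and on $G \times K$ — are exactly what is needed to absorb the discrepancy between ``$\circ$'' and ``$\cdot$''.
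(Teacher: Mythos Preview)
Your proposal is correct and follows the same approach as the paper: expand $g\circ h$ via its defining formula, apply $\cdot$-bilinearity of $\beta$, and observe that the conjugating factors $\lexp{g}{\lift\psi}$, $(\lexp{g}{\lift\psi})^{-1}$ cancel while the $\alpha$-term dies because it lands in $K$. The paper's proof is the same computation written more tersely (it jumps straight to $\beta(g,k)\cdot\beta(h,k)$ and then notes this equals $\beta(g,k)\circ\beta(h,k)$ since ``$\cdot$'' and ``$\circ$'' agree on $K$).
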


\begin{proof}
  Let $\beta\in \mathcal{B}$, and let $g,h,k\in G$. Then
  \begin{align*}
    \beta(g\circ h,k)
    &=
    \beta(g\cdot \lexp{g}{\lift{\psi}}\cdot h\cdot(\lexp{g}{\lift{\psi}})^{-1}\cdot
    \alpha(g,h),k)
    \\&=
    \beta(g,k)\cdot \beta(h,k)
    =
    \beta(g,k)\circ\beta(h,k).
  \end{align*}
  Similarly,
  \begin{equation*}
    \beta(g,h\circ k)=\beta(g,k)\circ\beta(h,k).\qedhere
  \end{equation*}
\end{proof}

This  result  implies, very  much  as  in~\cite{Koc22}, that  given
$(\wasoverline{\phi},\beta)\in\mathcal{A}\times\mathcal{B}\subseteq
\mathcal{A}_{\circ}\times \mathcal{B}_{\circ}$, if $\lift{\phi}$ is a lifting of
$\wasoverline\phi$, then  we may  iterate  our main  construction with
respect   to   $(G,\circ)$   and  $(\phi, \beta)$.   Therefore
$(G,\circ,\ast)$ is a bi-skew brace, where
\begin{equation*}
  g\ast h
  =
  g\circ\lexp{g}{\lift{\phi}}\circ
  h\circ\overline{\lexp{g}{\lift{\phi}}}\circ\beta(g,h). 
\end{equation*}
(Here an overline
denotes the inverse with respect to
$\circ$).

Two questions  arise naturally.  Is $(G,\cdot,\ast)$ a  bi-skew brace?
And  if  this  is the  case,  can  we  obtain  it directly  from  our
construction applied to $(G,\cdot)$?

The next result provides an  affirmative answer to both questions. 

\begin{theorem}
  \label{thm:main2}
  Set $\circ_0$ to be ``$\cdot$''. Let
  $((\wasoverline{\psi_n},\alpha_n) : n\ge 1)$ 
  be a sequence of elements of $\mathcal{A}\times\mathcal{B}$, and for
  all $n\ge 1$, define
  \begin{equation*}
    g\circ_n h
    =
    g\circ_{n-1}
    \lexp{g}{\lift{\psi}_n}\circ_{n-1}h \circ_{n-1}
    \overline{\lexp{g}{\lift{\psi}_n}}\circ_{n-1}\alpha_{n}(g,h),    
  \end{equation*} 
  where   $\lift{\psi}_n$    is   a    lifting   of
  $\wasoverline{\psi_n}$   and
  an overline
  denotes the  inverse with  respect to
  $\circ_{n-1}$. Then for all $n\ge 1$ we have
  \begin{equation*}
    g\circ_n h
    =
    g\cdot \lexp{g}{q_n}\cdot h\cdot (\lexp{g}{q_n})^{-1}\cdot \beta_n(g,h),
  \end{equation*} 
  where \begin{equation*}
    q_n
    =
    \sum_{\substack{A\subseteq\{1,\cdots, n\}\\ A\ne \emptyset}}
    \left(\prod_{j\in A} \lift{\psi}_j\right)=
    \lift{\left(\sum_{\substack{A\subseteq\{1,\cdots, n\}\\ A\ne \emptyset}}
    \left(\prod_{j\in A} \psi_j\right)\right)}
  \end{equation*}
  and $\beta_n\in\mathcal{B}$. In particular, $(G,\cdot,\circ_n)$ is a
  bi-skew brace.
\end{theorem}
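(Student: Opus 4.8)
The plan is to argue by induction on $n$, the case $n=1$ being immediate: there $\circ_1=\circ_{\psi_1,\alpha_1}$, so that $q_1=\lift{\psi}_1$ and $\beta_1=\alpha_1\in\mathcal{B}$ by definition. For the inductive step I would first observe that, by the inductive hypothesis, $\circ_{n-1}=\circ_{q_{n-1},\beta_{n-1}}$ is itself an instance of the main construction of Section~\ref{sec:main} applied to $(G,\cdot)$, with $q_{n-1}$ a lifting of an element of $\mathcal{A}$ and $\beta_{n-1}\in\mathcal{B}$. Hence $(G,\circ_{n-1})$ satisfies all the standing hypotheses ($K\le Z(G,\circ_{n-1})$ and $(A/K,\circ_{n-1})$ abelian, as recorded at the start of Section~\ref{sec:iterating}), and Lemmas~\ref{lem_weiterate} and~\ref{lem_weiterate2} yield $(\psi_n,\alpha_n)\in\mathcal{A}\times\mathcal{B}\subseteq\mathcal{A}_{\circ_{n-1}}\times\mathcal{B}_{\circ_{n-1}}$. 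Thus $\circ_n$ is exactly the main construction relative to $(G,\circ_{n-1})$ with data $(\psi_n,\alpha_n)$, so it is a group operation; once the closed form below is established, $(G,\cdot,\circ_n)=(G,\circ_{\Zero,\trivial},\circ_{q_n,\beta_n})$ is a bi-skew brace directly by Theorem~\ref{thm:maintheorem}, which settles the final ``in particular'' assertion.

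The core of the argument is to re-express $\circ_n$ in terms of ``$\cdot$''. I would substitute the inductive formula $x\circ_{n-1}y=x\cdot\lexp{x}{q_{n-1}}\cdot y\cdot(\lexp{x}{q_{n-1}})^{-1}\cdot\beta_{n-1}(x,y)$ into the definition of $\circ_n$ and expand. Since $\beta_{n-1}(G,K)=\alpha_n(G,K)=1$ and $K\le Z(G)$, the trailing factor $\circ_{n-1}\,\alpha_n(g,h)$ reduces to ordinary right multiplication by $\alpha_n(g,h)$, and all values of $\alpha_n$ and $\beta_{n-1}$ are central and may be gathered at the end. To pin down the non-central ``conjugator'' it is cleanest to carry out the whole computation modulo $K$: in $G/K$ the liftings are genuine endomorphisms, $A/K$ is abelian, and every bilinear map vanishes, so the expansion collapses (using that $\lexp{g}{q_{n-1}}$ and $\lexp{g}{\lift{\psi}_n}$ lie in the abelian $A/K$ and that $q_{n-1},\psi_n$ are endomorphisms of $(G/K,\circ_{n-1})$) to
\begin{equation*}
  g\circ_n h \equiv g\cdot\lexp{g}{q_n}\cdot h\cdot(\lexp{g}{q_n})^{-1}\pmod{K},
  \qquad
  q_n = q_{n-1}+\lift{\psi}_n+q_{n-1}\lift{\psi}_n.
\end{equation*}
Splitting the subsets $A\subseteq\{1,\dots,n\}$ according to whether $n\in A$ turns this recursion into the stated closed form, and since $q_n$ is a sum of products of liftings it is a lifting of the corresponding element of $\mathcal{A}$ by the Remark following~\eqref{eq:iota}. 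Because only $\iota(\lexp{g}{q_n})$ enters the operation and $\iota$ factors through $G/K$, this identifies the conjugating part of $\circ_n$ exactly.

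It then remains to identify the leftover, which is forced to be a single central map $\beta_n\colon G\times G\to K$ given by $\beta_n(g,h)=\lexp{g}{q_n}\cdot h^{-1}\cdot(\lexp{g}{q_n})^{-1}\cdot g^{-1}\cdot(g\circ_n h)$; the previous step guarantees that $\beta_n$ takes values in $K$. I expect the verification that $\beta_n\in\mathcal{B}$ to be the main obstacle, and it is precisely here that bilinear maps become unavoidable (see Remark~\ref{rem:the-reason-why}): when the expansion is performed in $G$ rather than in $G/K$, the passage from the mod-$K$ identity to an exact one introduces, besides $\beta_{n-1}$ and $\alpha_n$, central commutators of the form $[\lexp{g}{a},\lexp{h}{b}]$ with $a,b$ among the liftings involved. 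Each such commutator lands in $K\le Z(G)$, and a standard central-commutator identity together with the relations $\lexp{(g\cdot g')}{a}\equiv\lexp{g}{a}\cdot\lexp{g'}{a}\pmod{K}$ and $\lexp{K}{a}\le K$ shows that $(g,h)\mapsto[\lexp{g}{a},\lexp{h}{b}]$ is bilinear and kills $K$ on both sides, hence lies in $\mathcal{B}$. As $\mathcal{B}$ is closed under pointwise multiplication, $\beta_n$---being a product of $\beta_{n-1}$, $\alpha_n$ and finitely many such commutator maps---belongs to $\mathcal{B}$, completing the induction.
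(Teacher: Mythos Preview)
Your proposal is correct and follows essentially the same inductive strategy as the paper: establish the base case, expand $\circ_n$ in terms of $\circ_{n-1}$ using the inductive hypothesis, identify the recursion $q_n=q_{n-1}+\lift{\psi}_n+q_{n-1}\lift{\psi}_n$, and check $\beta_n\in\mathcal{B}$. The organisational difference is that the paper carries out the full expansion in $G$ at once via a dedicated technical lemma (Lemma~\ref{lemma: technical}, which computes $x\circ_{\psi,\alpha}y\circ_{\psi,\alpha}\overline{x}$ explicitly), thereby obtaining a closed formula
\[
  \beta_n(g,h)=[\lexp{g}{\lift{\psi}_n},\lexp{h}{q_{n-1}}]\cdot\beta_{n-1}(\lexp{g}{\lift{\psi}_n},h)\cdot\beta_{n-1}(h,(\lexp{g}{\lift{\psi}_n})^{-1})\cdot\beta_{n-1}(g,h)\cdot\alpha_n(g,h),
\]
whereas you first pass to $G/K$ to isolate $q_n$ and then treat $\beta_n$ as the residual. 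Your route is slightly cleaner conceptually; the paper's is more explicit. One small refinement: in your description of $\beta_n$ as ``a product of $\beta_{n-1}$, $\alpha_n$ and finitely many such commutator maps'', note that some of the $\beta_{n-1}$ factors appear precomposed with a lifting in one argument (as in $\beta_{n-1}(\lexp{g}{\lift{\psi}_n},h)$), so you also need the easy observation that such precompositions stay in $\mathcal{B}$ --- which follows from $\lexp{(gg')}{\lift{\psi}_n}\equiv\lexp{g}{\lift{\psi}_n}\cdot\lexp{g'}{\lift{\psi}_n}\pmod{K}$, $\lexp{K}{\lift{\psi}_n}\le K$, and $\beta_{n-1}(K,G)=1$.
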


\begin{remark}
  \label{rem:the-reason-why}
  The previous result is the main reason why we have introduced
  bilinear maps in our construction in Section~\ref{sec:main}. In
  the following proof one sees indeed that bilinear maps 
  naturally occur in the form of  products of suitable central
  commutators in the iterating steps, even if one starts the
  construction with an element of $\mathcal{A}$ only.
\end{remark}

Before the proof, we introduce a technical lemma, in which we compute
conjugates with respect to our new operations.  
\begin{lemma}\label{lemma: technical}
  Let $(\psi, \alpha)\in \mathcal{A} \times \mathcal{B}$. Then
  for all $x,y\in G$ we have  
  \begin{equation*}
    x\circ_{\psi,\alpha} y\circ_{\psi,\alpha} \overline{x}
    =
    x\cdot\lexp{x}{\lift{\psi}}\cdot y\cdot(\lexp{x}{\lift{\psi}})^{-1}\cdot \lexp{y}{\lift{\psi}}\cdot x^{-1}\cdot (\lexp{y}{\lift{\psi}})^{-1}\cdot \alpha(x,y)\cdot \alpha(y,x^{-1}),
  \end{equation*}
  where an overline
  denotes the  inverse with  respect to
  $\circ_{\psi,\alpha}$.
\end{lemma}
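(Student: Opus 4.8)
The plan is a direct computation, exploiting the associativity of $\circ_{\psi,\alpha}$ (established in Theorem~\ref{thm:maintheorem}) to write
\begin{equation*}
  x\circ_{\psi,\alpha} y\circ_{\psi,\alpha} \overline{x}
  =
  x\circ_{\psi,\alpha}(y\circ_{\psi,\alpha}\overline{x}),
\end{equation*}
and then expanding from the inside out. First I would record the explicit inverse, which, reading off the proof of Theorem~\ref{thm:maintheorem} with $(\phi,\beta)$ replaced by $(\psi,\alpha)$, is
\begin{equation*}
  \overline{x}
  =
  (\lexp{x}{\lift{\psi}})^{-1}\cdot x^{-1}\cdot \lexp{x}{\lift{\psi}}\cdot \alpha(x,x).
\end{equation*}
Throughout, the two governing facts are that $K\le Z(G)$, so that all values of $\alpha$ are central and may be freely collected at the right, and that $\alpha$ is bilinear with $\alpha(G,K)=\alpha(K,G)=1$, which lets me discard any $\alpha$-contribution coming from a factor already lying in $K$.

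Next I would expand $w:=y\circ_{\psi,\alpha}\overline{x}$ from the definition. The one delicate point is the central factor $\alpha(y,\overline{x})$: since $\alpha(x,x)\in K$ contributes trivially and the two $\lexp{x}{\lift{\psi}}$-factors of $\overline{x}$ cancel under $\alpha(y,-)$, one gets $\alpha(y,\overline{x})=\alpha(y,x^{-1})$. Collecting central factors at the right then yields
\begin{equation*}
  w
  =
  y\cdot \lexp{y}{\lift{\psi}}\cdot (\lexp{x}{\lift{\psi}})^{-1}\cdot x^{-1}\cdot \lexp{x}{\lift{\psi}}\cdot (\lexp{y}{\lift{\psi}})^{-1}\cdot \alpha(x,x)\cdot \alpha(y,x^{-1}).
\end{equation*}

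Then I would compute $x\circ_{\psi,\alpha}w = x\cdot \lexp{x}{\lift{\psi}}\cdot w\cdot (\lexp{x}{\lift{\psi}})^{-1}\cdot \alpha(x,w)$. The same kind of simplification gives $\alpha(x,w)=\alpha(x,y)\cdot\alpha(x,x)^{-1}$, so that the two occurrences of $\alpha(x,x)^{\pm 1}$ cancel and precisely $\alpha(x,y)\cdot\alpha(y,x^{-1})$ survive---exactly the central factors claimed. The remaining, non-central group word is then
\begin{equation*}
  x\cdot \lexp{x}{\lift{\psi}}\cdot y\cdot \lexp{y}{\lift{\psi}}\cdot (\lexp{x}{\lift{\psi}})^{-1}\cdot x^{-1}\cdot \lexp{x}{\lift{\psi}}\cdot (\lexp{y}{\lift{\psi}})^{-1}\cdot (\lexp{x}{\lift{\psi}})^{-1},
\end{equation*}
whereas the target skeleton is $x\cdot \lexp{x}{\lift{\psi}}\cdot y\cdot (\lexp{x}{\lift{\psi}})^{-1}\cdot \lexp{y}{\lift{\psi}}\cdot x^{-1}\cdot (\lexp{y}{\lift{\psi}})^{-1}$.

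Matching these two words is the only genuinely non-formal step, and it is where the hypotheses on $A$ and $K$ enter. Since $\lexp{x}{\lift{\psi}},\lexp{y}{\lift{\psi}}\in A$ and $A/K$ is abelian, the commutator $[\lexp{x}{\lift{\psi}},\lexp{y}{\lift{\psi}}]$ lies in $K\le Z(G)$. Transposing $\lexp{y}{\lift{\psi}}$ past $(\lexp{x}{\lift{\psi}})^{-1}$ on the left of the central $x^{-1}$, and rewriting the tail $\lexp{x}{\lift{\psi}}\cdot(\lexp{y}{\lift{\psi}})^{-1}\cdot(\lexp{x}{\lift{\psi}})^{-1}$ on its right, each produces one central commutator; these two are mutually inverse and cancel across $x^{-1}$, so the two skeletons coincide. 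I expect this rearrangement---introducing a single central commutator that then cancels---to be the main (if modest) obstacle, everything else being bookkeeping of central $\alpha$-values.
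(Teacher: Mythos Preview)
Your proposal is correct and follows essentially the same route as the paper: compute $y\circ_{\psi,\alpha}\overline{x}$ first, then apply $x\circ_{\psi,\alpha}(-)$, collecting the central $\alpha$-values and using that $A/K$ is abelian to rearrange the remaining word. The only cosmetic difference is that the paper packages your ``two cancelling central commutators'' step as a single appeal to~\eqref{eq:iota}, observing that conjugation by $\lexp{y}{\lift{\psi}}\cdot(\lexp{x}{\lift{\psi}})^{-1}$ equals conjugation by $(\lexp{x}{\lift{\psi}})^{-1}\cdot\lexp{y}{\lift{\psi}}$, which is the same fact.
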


\begin{proof}
	Write ``$\circ$'' for ``$\circ_{\psi,\alpha}$''. First, we compute $y\circ \overline{x}$:
	\begin{equation*}
		y\circ \overline{x}=y\cdot \lexp{y}{\lift{\psi}}\cdot \lexpq{x}{\lift{\psi}}^{-1}\cdot x^{-1}\cdot \lexp{x}{\lift{\psi}}\cdot \alpha(x,x)\cdot \lexpq{y}{\lift{\psi}}^{-1}\cdot \alpha(y,x^{-1}),
	\end{equation*}
	Therefore, employing~\eqref{eq:iota}, 
we deduce that 
	\begin{align*}
		 x\circ y\circ \overline{x} &= x \cdot  \lexp{x}{\lift{\psi}}\cdot (y\cdot \lexp{y}{\lift{\psi}}\cdot \lexpq{x}{\lift{\psi}}^{-1}\cdot x^{-1}\cdot \lexp{x}{\lift{\psi}}\cdot \alpha(x,x)\cdot \lexpq{y}{\lift{\psi}}^{-1}\cdot \alpha(y,x^{-1}))\\
		 &\quad\cdot \lexpq{x}{\lift{\psi}}^{-1}\cdot \alpha(x,y)\cdot \alpha(x,x^{-1})\\
		 &=x \cdot  \lexp{x}{\lift{\psi}}\cdot y\cdot \lexpq{x}{\lift{\psi}}^{-1}\cdot \lexp{y}{\lift{\psi}}\cdot x^{-1}\cdot \lexpq{y}{\lift{\psi}}^{-1}\cdot \lexp{x}{\lift{\psi}}\cdot \lexpq{x}{\lift{\psi}}^{-1} \\
		 &\quad\cdot \alpha(x,y)\cdot \alpha(y,x^{-1})\\
		 &= x\cdot\lexp{x}{\lift{\psi}}\cdot y\cdot(\lexp{x}{\lift{\psi}})^{-1}\cdot \lexp{y}{\lift{\psi}}\cdot x^{-1}\cdot (\lexp{y}{\lift{\psi}})^{-1}\cdot \alpha(x,y)\cdot \alpha(y,x^{-1}).
	\end{align*}
	as claimed.
\end{proof}

\begin{proof}[Proof of Theorem~\ref{thm:main2}]
  We prove  the result  by induction  on $n$.  For $n=1$,  the result
  follows from Theorem~\ref{thm:maintheorem},   with  $q_1=\lift{\psi}_1$   and
  $\beta_1=\alpha_1$.
  
  Suppose that the assertion holds for $n-1$, where $n\ge 2$, so that
  we have
  \begin{equation*}
    g\circ_{n-1} h
    =
    g\cdot \lexp{g}{q_{n-1}}\cdot h\cdot (\lexp{g}{q_{n-1}})^{-1}\cdot
    \beta_{n-1}(g,h). 
  \end{equation*} 
  Thus   $(G,\cdot,\circ_{n-1})$  is   a  bi-skew   brace,  again   by
  Theorem~\ref{thm:maintheorem}.

  Write $\lift{\psi}$ for  $\lift{\psi}_{n}$, $\alpha$ for $\alpha_n$,
  $q$ for $q_{n-1}$, and $\beta$ for $\beta_{n-1}$. Then we have
  \begin{equation*}
    g\circ_{n} h
    =
    g\circ_{n-1}
    \lexp{g}{\lift{\psi}}
    \circ_{n-1} h \circ_{n-1}
    \overline{\lexp{g}{\lift{\psi}}}
    \circ_{n-1}
    \alpha(g,h), 
  \end{equation*}
  where $\overline{\lexp{g}{\lift{\psi}}}$ denotes the inverse with respect
  to $\circ_{n-1}$. 
  By Lemma~\ref{lemma: technical} we have
  \begin{align*}
    &\lexp{g}{\lift{\psi}}\circ_{n-1}h\circ_{n-1}\overline{\lexp{g}{\lift{\psi}}}\\
    &\qquad=\lexp{g}{\lift{\psi}}\cdot \lexp{g}{q\lift{\psi}}\cdot h\cdot (\lexp{g}{q\lift{\psi}})^{-1}\cdot \lexp{h}{q}\cdot (\lexp{g}{\lift{\psi}})^{-1}\cdot (\lexp{h}{q})^{-1}\cdot \beta(\lexp{g}{\lift{\psi}},h)\cdot \beta(h,(\lexp{g}{\lift{\psi}})^{-1})\\
    &\qquad=\lexp{g}{\lift{\psi}}\cdot \lexp{g}{q\lift{\psi}}\cdot h\cdot (\lexp{g}{q\lift{\psi}})^{-1}\cdot (\lexp{g}{\lift{\psi}})^{-1}\cdot [\lexp{g}{\lift{\psi}},\lexp{h}{q}]\cdot \beta(\lexp{g}{\lift{\psi}},h)\cdot \beta(h,(\lexp{g}{\lift{\psi}})^{-1})\\
    &\qquad=\lexp{g}{\lift{\psi}+q\lift{\psi}}\cdot h\cdot(\lexp{g}{\lift{\psi}+q\lift{\psi}})^{-1}\cdot [\lexp{g}{\lift{\psi}},\lexp{h}{q}]\cdot \beta(\lexp{g}{\lift{\psi}},h)\cdot \beta(h,(\lexp{g}{\lift{\psi}})^{-1}).
  \end{align*}
  We obtain that 
  \begin{align*}
    &g\circ_{n-1}(\lexp{g}{\lift{\psi}+q\lift{\psi}}\cdot h\cdot(\lexp{g}{\lift{\psi}+q\lift{\psi}})^{-1}\cdot [\lexp{g}{\lift{\psi}},\lexp{h}{q}]\cdot \beta(\lexp{g}{\lift{\psi}},h)\cdot \beta(h,(\lexp{g}{\lift{\psi}})^{-1}))\\
    &\qquad=g\cdot \lexp{g}{q}\cdot \lexp{g}{\lift{\psi}+q\lift{\psi}}\cdot h\cdot(\lexp{g}{\lift{\psi}+q\lift{\psi}})^{-1}\cdot [\lexp{g}{\lift{\psi}},\lexp{h}{q}]\cdot \beta(\lexp{g}{\lift{\psi}},h)\cdot \beta(h,(\lexp{g}{\lift{\psi}})^{-1})\\
    &\qquad\quad \cdot (\lexp{g}{q})^{-1}\cdot \beta(g,h)\\
    &\qquad= g\cdot \lexp{g}{q+\lift{\psi}+q\lift{\psi}}\cdot h\cdot (\lexp{g}{q+\lift{\psi}+q\lift{\psi}})^{-1}\\
    &\qquad \quad\cdot [\lexp{g}{\lift{\psi}},\lexp{h}{q}]\cdot \beta(\lexp{g}{\lift{\psi}},h)\cdot \beta(h,(\lexp{g}{\lift{\psi}})^{-1})
    \cdot \beta(g,h).
  \end{align*}
   (Here we have used the fact that $[\lexp{g}{\lift{\psi}},\lexp{h}{q}]\in K\le
  Z(G)$, as $A / K$ is abelian.)
  Finally, note that if $k\in K$ and $x\in G$, then we have $x\circ_{n-1}
  k=x\cdot k$; thus, as  $\alpha(g,h)\in K$, we find that $g\circ_n h$ equals
  \begin{align*}
    &g\cdot \lexp{g}{q+\lift{\psi}+q\lift{\psi}}\cdot h\cdot
    (\lexp{g}{q+\lift{\psi}+q\lift{\psi}})^{-1}\\
    &\qquad\quad\cdot
    [\lexp{g}{\lift{\psi}},\lexp{h}{q}]\cdot \beta(\lexp{g}{\lift{\psi}},h)\cdot
    \beta(h,(\lexp{g}{\lift{\psi}})^{-1}) 
    \cdot \beta(g,h)\cdot \alpha(g,h).
  \end{align*}
  Therefore we may set 
  \begin{equation*}
    \beta_{n}(g,h)
    =
    [\lexp{g}{\lift{\psi}},\lexp{h}{q}]\cdot \beta(\lexp{g}{\lift{\psi}},h)\cdot
    \beta(h,(\lexp{g}{\lift{\psi}})^{-1})\cdot \beta(g, h) \cdot \alpha(g,h), 
  \end{equation*}
  which can be easily seen to satisfy the required properties, and we note that 
  \begin{align*}
    &q+\lift{\psi}+q\lift{\psi}=q_{n-1}+\lift{\psi}_{n}+q_{n-1}\lift{\psi}_{n}\\
    &\qquad=\sum_{\substack{A\subseteq\{1,\cdots, n-1\}\\ A\ne
        \emptyset}} \left(\prod_{j\in A}
    \lift{\psi}_j\right)+\lift{\psi}_{n}+\sum_{\substack{A\subseteq\{1,\cdots,
        n-1\}\\ A\ne \emptyset}} \left(\prod_{j\in A}
    \lift{\psi}_j\right)\lift{\psi}_{n}\\ 
    &\qquad=\sum_{\substack{A\subseteq\{1,\cdots, n\}\\ A\ne
        \emptyset}} \left(\prod_{j\in A} \lift{\psi}_j\right)=q_{n}.
  \end{align*}
  We  conclude,  appealing once more  to Theorem~\ref{thm:maintheorem},  that
  $(G,\cdot,\circ_n)$ is a bi-skew brace.
\end{proof}

\begin{corollary}\label{cor:main2}
  In  the  setting of  Theorem~\ref{thm:main2},
  $(G,\circ_n,\circ_m)$ is a bi-skew brace,  for  all $n,m\ge  0$.
\end{corollary}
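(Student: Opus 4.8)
The plan is to observe that the substantive work has already been carried out in Theorem~\ref{thm:main2}, which rewrites each iterated operation $\circ_n$ in the canonical form $\circ_{\psi,\alpha}$ for a suitable pair $(\psi,\alpha)\in\mathcal{A}\times\mathcal{B}$. Once this is recorded, the corollary reduces to a single application of Theorem~\ref{thm:maintheorem}, which guarantees that any two operations of this canonical form together constitute a bi-skew brace.

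First I would record, for every $n\ge 0$, an explicit pair $(\Psi_n,\beta_n)\in\mathcal{A}\times\mathcal{B}$ with $\circ_n=\circ_{\Psi_n,\beta_n}$. For $n\ge 1$ this is precisely the content of Theorem~\ref{thm:main2}: one takes
\[
  \Psi_n
  =
  \sum_{\substack{A\subseteq\{1,\dots,n\}\\ A\ne\emptyset}}\ \prod_{j\in A}\psi_j
  \in\mathcal{A}
  \qquad\text{and}\qquad
  \beta_n\in\mathcal{B},
\]
noting that $q_n=\lift{\Psi_n}$ is exactly a lifting of $\Psi_n$, so that the formula for $g\circ_n h$ supplied by the theorem is literally the defining formula of $\circ_{\Psi_n,\beta_n}$. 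For the base case $n=0$ the operation $\circ_0$ is ``$\cdot$'' itself, which also has this shape: take $\Psi_0=\Zero$, the endomorphism of $G/K$ sending every element to the identity, together with the trivial bilinear map $\beta_0$; choosing the lifting $\lexp{g}{\lift{\Psi_0}}=1$ then gives $g\circ_{\Psi_0,\beta_0}h=g\cdot h$.

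With this identification in hand, I would simply invoke Theorem~\ref{thm:maintheorem}. Fixing $n,m\ge 0$, both $(\Psi_n,\beta_n)$ and $(\Psi_m,\beta_m)$ lie in $\mathcal{A}\times\mathcal{B}$, and so the theorem, applied to this pair, asserts that
\[
  (G,\circ_n,\circ_m)
  =
  (G,\circ_{\Psi_n,\beta_n},\circ_{\Psi_m,\beta_m})
\]
is a bi-skew brace; since $n$ and $m$ were arbitrary, this is exactly the claim, and the diagonal case $n=m$ needs no separate treatment, being covered by feeding in the same pair twice.

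I do not expect any genuine obstacle here: the entire argument is bookkeeping, resting on Theorem~\ref{thm:main2} (which forces every $\circ_n$ into standard form) and Theorem~\ref{thm:maintheorem} (which handles any two standard-form operations). The one point I would take care to check is the base case, namely that ``$\cdot$'' is itself captured by the construction via the zero endomorphism and the trivial bilinear map, so that the values $n=0$ and $m=0$ genuinely fall within the scope of the corollary.
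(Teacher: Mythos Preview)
Your argument is correct, and in fact it is tidier than the route the paper takes. You observe that Theorem~\ref{thm:main2} already expresses every $\circ_n$ as $\circ_{\Psi_n,\beta_n}$ for a pair in the \emph{original} $\mathcal{A}\times\mathcal{B}$ attached to $(G,\cdot)$, and then a single invocation of Theorem~\ref{thm:maintheorem} finishes the job for any $n,m$. The paper proceeds differently: it assumes $n\le m$ and \emph{restarts} the iterative construction with $(G,\circ_n)$ in the role of the base group, using Lemmas~\ref{lem_weiterate} and~\ref{lem_weiterate2} to guarantee that the shifted sequence $((\psi_{i+n},\alpha_{i+n}):i\ge 1)$ lies in $\mathcal{A}_{\circ_n}\times\mathcal{B}_{\circ_n}$, so that Theorem~\ref{thm:main2} applied to this new base yields the bi-skew brace $(G,\circ_n,\circ_m)$. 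Your approach avoids these two lemmas entirely and uses only the displayed formula from Theorem~\ref{thm:main2}; the paper's approach, by contrast, puts the inclusions $\mathcal{A}\subseteq\mathcal{A}_{\circ}$ and $\mathcal{B}\subseteq\mathcal{B}_{\circ}$ to work and emphasises that the whole machinery can be rebased at any step. Both are valid; yours is the shorter path.
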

\begin{proof}
  Assuming without  loss of generality  that $n  \le m$, we  can apply
  Theorem~\ref{thm:main2}    starting     with    $(G,\circ_n)$,    as
  $( (\wasoverline{\psi}_{i+n},\alpha_{i+n}) : i\ge 1)$ is a sequence of
  elements of $\mathcal{A}_{\circ_n}\times \mathcal{B}_{\circ_n}$ by
  Lemmas~\ref{lem_weiterate} and~\ref{lem_weiterate2}.
\end{proof}

\section{Examples}
\label{sec:examples}

\subsection{Koch's construction revisited}

We begin by recovering Koch's construction of~\cite{Koc22}.

\begin{example}
  \label{exa:koch}
  Let ${\psi}\colon G\to G$ be an abelian map, that is, an endomorphism of
  $G$     such     that     $\lexp{G}{{\psi}}=A$    is     abelian.

  Take
  $K=\lexp{[G,G]}{{\psi}}=1\le  Z(G)$. Then  ${\psi}\in\mathcal{A}$, and  we
  can consider $\phi=-{\psi}\in \mathcal{A}$, where
  \begin{equation*}
    \lexp{g}{\phi}
    =
    \lexpp{g^{-1}}{{\psi}}.
  \end{equation*}	
  Now take  $( \phi_n : n\ge 1)$,  where $\phi_n=\phi$ for  all $n\ge
  1$.  Then Theorems~\ref{thm:maintheorem}~and \ref{thm:main2}
  yield a  family of  operations 
  $( \circ_n : n\ge 1 )$ defined by
  \begin{equation*}
    g\circ_n h
    =
    g\cdot \lexp{g}{q_n}\cdot h\cdot (\lexp{g}{q_n})^{-1}
  \end{equation*}
  such that $(G,\circ_n,\circ_m)$  is a bi-skew brace  for all $n,m\ge
  1$. Here
  \begin{align*}
    q_n
    =
    \sum_{\substack{A\subseteq\{1,\cdots,      n\}
        \\
        A\ne \emptyset}}
    \phi^{|A|}=\sum_{i=1}^n            \binom{n}{i}\phi^i=\sum_{i=1}^n
    \binom{n}{i}(-{\psi})^i.
  \end{align*}
  Note that all the central bilinear maps are trivial, as they have
  image in $K=1$. 
\end{example}

\begin{example}
  \label{ex:variation}
  In  a  slight variation  on  Example~\ref{exa:koch}, and  with the  same
  setting, we  may take  $( \psi_n : n\ge 1 )$,  where ${\psi}_n={\psi}$
  for  all   $n\ge  1$.  Then   we  obtain  a  family   of  operations
  $( \circ_n : n\ge  1 )$  such   that  $(G,\circ_n,\circ_m)$  is  a
  bi-skew brace for all $n,m\ge 1$, where
  \begin{equation*}
    g\circ_n h
    =
    g\cdot \lexp{g}{q_n}\cdot h\cdot (\lexp{g}{q_n})^{-1}
  \end{equation*}
  and 
  \begin{align*}
    q_n
    =
    \sum_{\substack{A\subseteq\{1,\cdots, n\}
        \\
        A\ne \emptyset}} {\psi}^{\Size{A}}
    =
    \sum_{i=1}^n \binom{n}{i} {\psi}^i.
  \end{align*}
\end{example}

We  can  also  give  the following  slight  generalisation  of  Koch's
construction, whose proof  follows as an immediate
  application of Theorem~\ref{thm:maintheorem}.
\begin{proposition}
  Let $A$ be an abelian subgroup of $G$, and let 
  \begin{equation*}
    \mathcal{A}=\{\psi\in \End(G)\mid \lexp{G}{\psi}\le A\}.
  \end{equation*}
  For all $\psi\in \mathcal{A}$, write 
  \begin{equation*}
    g\circ_{\psi} h=g\cdot \lexp{g}{\psi}\cdot h\cdot \lexp{g}{\psi}^{-1}.
  \end{equation*}
  Then $(\circ_{\psi}\mid \psi\in \mathcal{A})$ is a brace block on $G$. 
\end{proposition}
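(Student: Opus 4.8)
The plan is to recognise this Proposition as the special case $K = 1$ of the main construction of Section~\ref{sec:main}, and then to quote Theorem~\ref{thm:maintheorem} directly. First I would set $K = 1$, which is trivially contained in $Z(G)$, and take the given abelian subgroup $A$ as the subgroup $A$ of that construction; then $A/K = A$ is abelian, as required. With this choice the quotient $G/K$ is simply $G$, so the ring $\mathcal{A}$ defined in Section~\ref{sec:main} coincides exactly with the set $\mathcal{A}$ in the statement of the Proposition, namely the endomorphisms of $G$ with image contained in $A$.

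Next I would observe that both the liftings and the bilinear maps degenerate. Since $K = 1$, each $\psi \in \mathcal{A}$ has a unique lifting $\lift{\psi}$, which is $\psi$ itself; and every $\alpha \in \mathcal{B}$ takes values in $K = 1$, so $\mathcal{B}$ consists solely of the trivial bilinear map $\trivial$, with $\trivial(g,h) = 1$ for all $g, h \in G$. Substituting $\lift{\psi} = \psi$ and $\alpha = \trivial$ into the defining formula
\begin{equation*}
  g \circ_{\psi, \alpha} h
  =
  g \cdot \lexp{g}{\lift{\psi}} \cdot h \cdot (\lexp{g}{\lift{\psi}})^{-1} \cdot \alpha(g,h)
\end{equation*}
reproduces precisely the operation $g \circ_{\psi} h = g \cdot \lexp{g}{\psi} \cdot h \cdot \lexp{g}{\psi}^{-1}$ of the Proposition.

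Finally I would appeal to Theorem~\ref{thm:maintheorem}. For any two $\psi, \phi \in \mathcal{A}$ the pairs $(\psi, \trivial)$ and $(\phi, \trivial)$ lie in $\mathcal{A} \times \mathcal{B}$, so the Theorem yields that $(G, \circ_{\phi, \trivial}, \circ_{\psi, \trivial}) = (G, \circ_{\phi}, \circ_{\psi})$ is a bi-skew brace; in particular every pair of operations in the family $(\circ_{\psi} \mid \psi \in \mathcal{A})$ gives a (bi-)skew brace, which is exactly the assertion that this family is a brace block. There is essentially no genuine obstacle here: the only thing to check with any care is that the degenerate data $K = 1$ satisfies the hypotheses of the construction, since all the substantive work has already been carried out in Theorem~\ref{thm:maintheorem}.
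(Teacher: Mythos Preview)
Your proposal is correct and follows exactly the approach of the paper, which states that the proof ``follows as an immediate application of Theorem~\ref{thm:maintheorem}''. You have simply made explicit the specialisation $K = 1$ (so that liftings are the endomorphisms themselves and $\mathcal{B}$ reduces to the trivial bilinear map) that the paper leaves implicit.
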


\subsection{Endomorphisms, liftings, and central bilinear maps}

The next example shows that the operations obtained with liftings are
not covered by endomorphisms and central bilinear maps alone.

If $G$ is  a group, then for  all $i\ge 0$ we  write $\gamma_i(G)$ for
the $i$-th  term of  the lower  central series,  and $Z_i(G)$  for the
$i$-th term of the upper central series.
\begin{example}
  Let $p \ge 5$  be a prime, and let $H$ the  group of upper unitriangular
  matrices in $\GL(5, p)$.

  Write as  usual $e_{ij}$ for a matrix whose
  $(i,  j)$-entry is  $1$ and  all other  entries are  zero, and  let
  $t_{ij} = 1 + e_{ij} \in H$, for $i < j$.
  
  The following elementary facts
  hold:
  \begin{enumerate}
  \item $H$ has exponent $p$.
  \item $\gamma_{k}(H) = \Span{ t_{ij} : j - i \ge k }$.
  \item $H$ has nilpotence class four.
  \item The upper and lower central series of $H$ coincide.
  \end{enumerate}

  Moreover we have
  \begin{equation*}
    [t_{12}, t_{23}] = t_{13},	\quad
    [t_{34}, t_{45}] = t_{35},\quad 
    [t_{13}, t_{35}] = t_{15} \ne 1.
  \end{equation*}
  Let $G = \Span{x, y} \times H$, where $\Span{x, y}$ is elementary
  abelian of order $p^{2}$. Let $K = \gamma_{4}(H) = \Span{t_{15}} \le
  G$, and let $A = \gamma_{2}(H) \le G$.

  We have the following facts:
  \begin{enumerate}
  \item $K \le Z(G)$.
  \item $A/K$ is abelian, while $A$ is not.
  \item $\Span{t_{13}, t_{35}} K / K$ is elementary abelian of order $p^{2}$.
  \item The assignment
    \begin{equation*}
      x \mapsto t_{13},\quad y \mapsto t_{35},\quad
      t_{ij} \mapsto 1
    \end{equation*}
    uniquely defines an endomorphism $\psi$ of $G/K$.
  \item The endomorphism $\psi$ does not extend to an endomorphism of
    $G$, because if $g \in t_{13} K$ and $h \in t_{35} K$, then $[g,
    h] = [t_{13}, t_{35}]$, as $K \le Z(G)$, so that  $[g,
    h] = t_{15} \ne 1$.
  \end{enumerate}

  We claim that there is no $\phi \in \End(G)$ and central bilinear map
  $\alpha \colon G \times G \to K$ such that, for all $g, h \in G$, one has
  \begin{equation}
    \label{eq:no-no}
    g \circ_{\psi, \trivial} h
    =
    g \cdot \lexp{g}{\phi} \cdot h \cdot \lexp{g}{\phi}^{-1} \cdot
    \alpha(g, h),
  \end{equation}
  where $\trivial \colon G \times G \to K$ is the trivial bilinear map,
  mapping everything to $1$.
  In other words, endomorphisms and central bilinear maps alone cannot
  replace liftings. 
  
  If~\eqref{eq:no-no} holds, set $g = x$ to get for all $h \in H$ that
  \begin{align*}
    x \circ_{\psi, \trivial} h
    &=
    x \cdot \lexp{x}{\lift{\psi}} \cdot h \cdot
    \lexp{x}{\lift{\psi}}^{-1}
    =
    x \cdot h \cdot [h^{-1}, t_{13}]\\
    &=
    x \cdot h \cdot [h^{-1}, \lexp{x}{\phi}] \cdot \alpha(x, h).
  \end{align*}
  Thus for all $h \in G$ we have
  \begin{align*}
    [h^{-1}, \lexp{x}{\phi}^{-1} \cdot t_{13}]
    &=
    [h^{-1}, \lexp{x}{\phi}^{-1}]\cdot [\lexp{x}{\phi}^{-1},[h^{-1},
        t_{13}]]\cdot [h^{-1}, t_{13}] 
    \\&\equiv
     [h^{-1}, \lexp{x}{\phi}^{-1}]\cdot  [h^{-1}, t_{13}]
      \equiv 1
      \pmod{K}.
  \end{align*}
  Since $K = \gamma_{4}(H) \le Z(G)$, we obtain that
  $\lexp{x}{\phi}^{-1} \cdot t_{13} = a \in Z_{2}(G)$.

  A  similar argument  yields $\lexp{y}{\phi}^{-1}
    \cdot    t_{35}   =    b    \in   Z_{2}(G)$.     Now   we    have
  $[\gamma_{2}(G),   Z_{2}(G)]  =   1$.   Moreover
    $Z_{2}(G) =  \Span{x, y} \times \gamma_{3}(H)$  is abelian.  Since
    $t_{13}, t_{35} \in \gamma_{2}(G)$, we obtain
  \begin{align*}
    1
    =
    \lexp{[x, y]}{\phi}
    =
    [t_{13} \cdot a^{-1}, t_{35} \cdot b^{-1}]
    =
    [t_{13}, t_{35}]
    =
    t_{35}
    \ne
    1,
  \end{align*}
  a contradiction.
\end{example}

We now sketch a more complicated construction, which cannot be
expressed as a direct product as the previous one.
\begin{example}
  Let
  \begin{equation*}
    G
    =
    \Span{
      x, y, t_{1}, t_{2}, t_{3}, t_{4}
      :
      \text{$[y, x] = 1$, and commutators of weight $> 4$ vanish}
      }.
  \end{equation*}
  So $G$ is the quotient group of the free group in the variety of
  nilpotent groups of class four in the generators $x, y, t_{1}, t_{2},
  t_{3}, t_{4}$, by the normal subgroup generated by $[y, x]$.

  Let $A = \gamma_{2}(G)$, and let $K = \gamma_{4}(G)$. Note that $A$ is
  non-abelian, as $[[t_{1}, t_{2}], [t_{3}, t_{4}]] \ne 1$, while $A /
  K$ is abelian, as $[\gamma_{2}(G), \gamma_{2}(G)] \le \gamma_{4}(G)$.
  
  The assignment \begin{equation*}
    x \mapsto  [t_{1}, t_{2}],
    \quad
    y \mapsto [t_{3}, t_{4}],
    \quad
    t_{i} \mapsto 1
  \end{equation*}
  extends to an endomorphism $\psi$ of $G/K$, as the
  quotient of $G$ by the normal subgroup generated by the $t_{i}$
  (which contains $K$) is
  free abelian of rank two, and so is
  $\Span{[t_{1}, t_{2}], [t_{3}, t_{4}]} K / K$. However, very much as
  in the previous example, $\psi$ one sees does
  not extend to an endomorphism of $G$

  One  can check  with the  Nilpotent Quotient  Algorithm~\cite{nq} of
  \textsf{GAP}~\cite{GAP4} that $Z(G)  = \gamma_{4}(G)$, and $Z_{2}(G)
  = \gamma_{3}(G)$. Then an argument entirely similar to the one in
  the previous example shows that an identity like~\eqref{eq:no-no}
  does not hold here.
\end{example}

\subsection{Groups of nilpotence class two}

Let $G$ be a group of nilpotence class two. Our construction applies
to $G$ with $A = G$ and $K = [G,G]$. Here $\mathcal{A}
= \End(A/K)$. In particular, we obtain the following two classes of examples.

\begin{proposition}\label{pro:npower}
  Let $G$ be a group of nilpotence class two. For all $n\in N$ write 
  \begin{equation*}
    g \circ_{n} h
    =
    g \cdot g^{n} \cdot h \cdot g^{-n}
    =
    g \cdot h \cdot [h^{-1}, g]^{n}
    =
    g \cdot h \cdot [g, h]^{n}.
  \end{equation*}
  Then $(\circ_n\mid n\in \Z)$ is a brace block on $G$.
\end{proposition}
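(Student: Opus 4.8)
The plan is to realise each operation $\circ_n$ as a member of the brace block produced by Theorem~\ref{thm:maintheorem}, for a suitable choice of $(\psi,\alpha)\in\mathcal{A}\times\mathcal{B}$, and then to read off the three displayed expressions as elementary identities in a class-two group.

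First I would fix the data of the construction as prescribed. Since $G$ has nilpotence class two we have $K=[G,G]\le Z(G)$, and with $A=G$ the quotient $A/K=G/[G,G]$ is abelian, so that $\mathcal{A}=\End(G/K)$ is the full endomorphism ring of the abelianisation. For $n\in\Z$ the $n$-th power map on the abelian group $G/K$ is an endomorphism — concretely $\psi_n=n\cdot\id_{G/K}\in\mathcal{A}$ — and its image lies in $A/K=G/K$ trivially. I would then observe that the (merely set-theoretic) $n$-th power map $g\mapsto g^n$ on $G$ is a lifting $\lift{\psi_n}$, since $(g^n)K=(gK)^n=\lexp{gK}{\psi_n}$. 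Taking $\alpha=\trivial$ gives $(\psi_n,\trivial)\in\mathcal{A}\times\mathcal{B}$, and by the very definition of the operation
\[
  g\circ_{\psi_n,\trivial}h
  =
  g\cdot\lexp{g}{\lift{\psi_n}}\cdot h\cdot(\lexp{g}{\lift{\psi_n}})^{-1}\cdot\trivial(g,h)
  =
  g\cdot g^n\cdot h\cdot g^{-n},
\]
which is exactly the first displayed formula for $\circ_n$.

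Next I would justify the two remaining expressions by standard class-two identities. Writing $g^n\cdot h\cdot g^{-n}=h\cdot(h^{-1}g^n h g^{-n})=h\cdot[h^{-1},g^n]$ and using that in class two the commutator is central and bi-additive gives $[h^{-1},g^n]=[h^{-1},g]^n$, while $[h^{-1},g]=[h,g]^{-1}=[g,h]$. Hence $g\cdot g^n\cdot h\cdot g^{-n}=g\cdot h\cdot[h^{-1},g]^n=g\cdot h\cdot[g,h]^n$, matching the statement.

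Finally, since every $\circ_n$ equals $\circ_{\psi_n,\trivial}$ with $(\psi_n,\trivial)\in\mathcal{A}\times\mathcal{B}$, the family $(\circ_n:n\in\Z)$ is a subfamily of the brace block of Theorem~\ref{thm:maintheorem}; thus $(G,\circ_n,\circ_m)$ is a (bi-)skew brace for all $n,m$, which is precisely the assertion that $(\circ_n:n\in\Z)$ is a brace block. I do not expect a genuine obstacle: the only points requiring any care are checking that $g\mapsto g^n$ is a legitimate lifting (immediate) and the class-two commutator manipulations (routine), while the substantive content is supplied entirely by Theorem~\ref{thm:maintheorem}.
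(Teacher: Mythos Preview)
Your proposal is correct and follows exactly the approach of the paper: set $A=G$, $K=[G,G]$, note that the $n$-th power map on $G/K$ lies in $\mathcal{A}$ with $g\mapsto g^n$ as a lifting, and invoke Theorem~\ref{thm:maintheorem}. The paper's own proof is the two-line version of what you wrote, omitting the routine class-two commutator identities and the explicit identification of the lifting.
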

\begin{proof}
  As $G/K$ is abelian, $gK\mapsto g^nK$ is an endomorphism of $G/K$. Therefore we can apply 
  Theorem~\ref{thm:maintheorem} to derive the assertion. 
\end{proof}

\begin{example}
  \label{ex:car-auto}
  In~\cite{car-auto}, the first author  constructed examples of finite
  $p$-groups of class  two, for $p > 2$, where  $[G,G]$ coincides with
  the Frattini  subgroup $\Frat(G)$, such that  $\End(G)$ induces only
  the  identity   map  $\mathbf{1}$   and  the   trivial  endomorphism
  $\mathbf{0}$ on  $G/[G,G]$ (there are  many similar examples  in the
  literature).
  
  In particular, if $n\not\equiv 0,1\pmod p$, then $gK\mapsto g^nK$ is
  an endomorphism  of the elementary abelian  group $p$-group $G/Z(G)$
  which cannot  be lifted to an  endomorphism of $G$. This  means that
  the operation $\circ_n$ defined by
  \begin{equation*}
    g \circ_{n} h = g \cdot g^{n} \cdot h \cdot g^{-n}
    =
    g \cdot h \cdot [g, h]^{n}
  \end{equation*}
  can be  obtained either with a  lifting or with a  bilinear map, but
  not with an endomorphism alone.
\end{example}

\begin{remark}
  See~\cite{CS22-p} for an application of Proposition~\ref{pro:npower}
  to skew braces and Rota--Baxter operators
  (\cite{RB}).
\end{remark}

\begin{proposition}
  Let  $G$  be  a  group   of  nilpotence  class  two.  For  all
  $\psi\in\End(G)$ write
  \begin{equation*}
    g\circ_{\psi}   h=g\cdot  \lexp{g}{\psi}\cdot   h\cdot
    \lexp{g}{\psi}^{-1}.
  \end{equation*}
  Then  $(\circ_{\psi}: \psi\in  \End(G))$ is  a brace  block on
  $G$.
\end{proposition}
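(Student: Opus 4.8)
The plan is to realise this statement as a direct specialisation of Theorem~\ref{thm:maintheorem}. Since $G$ has nilpotence class two, we have $[G,G]\le Z(G)$, so the hypotheses of Section~\ref{sec:main} are met by taking $A=G$ and $K=[G,G]$: indeed $K\le Z(G)$ and $A/K=G/[G,G]$ is abelian. In this situation $\mathcal{A}=\End(G/K)$, because the image condition $\lexp{G/K}{\bar\psi}\le A/K$ becomes vacuous once $A=G$.

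Next I would pass from an endomorphism $\psi\in\End(G)$ to the quotient. As $[G,G]$ is characteristic, $\psi$ maps $K=[G,G]$ into itself and hence descends to an endomorphism $\bar\psi$ of $G/K$; by the previous observation $\bar\psi\in\mathcal{A}$. Moreover $\psi$ itself is a lifting of $\bar\psi$ in the sense of Section~\ref{sec:main}, since $(\lexp{g}{\psi})K=\lexp{gK}{\bar\psi}$ for all $g\in G$. Choosing the trivial bilinear map $\trivial\in\mathcal{B}$ and using $\psi$ as the lifting, the operation $\circ_{\bar\psi,\trivial}$ of Section~\ref{sec:main} specialises to
\begin{equation*}
  g\circ_{\bar\psi,\trivial}h=g\cdot\lexp{g}{\psi}\cdot h\cdot(\lexp{g}{\psi})^{-1},
\end{equation*}
which is exactly $g\circ_\psi h$.

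Finally I would invoke Theorem~\ref{thm:maintheorem}. For any $\psi,\phi\in\End(G)$ the pairs $(\bar\phi,\trivial)$ and $(\bar\psi,\trivial)$ lie in $\mathcal{A}\times\mathcal{B}$, so $(G,\circ_{\bar\phi,\trivial},\circ_{\bar\psi,\trivial})=(G,\circ_\phi,\circ_\psi)$ is a bi-skew brace; as $\psi,\phi$ were arbitrary, the family $(\circ_\psi:\psi\in\End(G))$ is a brace block. There is essentially no obstacle here beyond this bookkeeping, as all the content is carried by Theorem~\ref{thm:maintheorem}. The only point worth a moment's attention is the verification that $\bar\psi\in\mathcal{A}$, which holds precisely because the choice $A=G$ makes the image constraint trivial---this is what lets us range over the full monoid $\End(G)$ rather than only over those endomorphisms whose image lands in a fixed abelian subgroup.
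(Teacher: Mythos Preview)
Your proof is correct and follows exactly the paper's route: take $A=G$, $K=[G,G]$, observe that every $\psi\in\End(G)$ descends to $\bar\psi\in\End(G/K)=\mathcal{A}$ and is itself a lifting of $\bar\psi$, then apply Theorem~\ref{thm:maintheorem} with the trivial bilinear map. One minor terminological point: to guarantee that $\psi$ maps $K$ into itself you need $[G,G]$ to be \emph{fully invariant} (stable under all endomorphisms), not merely characteristic (stable under automorphisms); the commutator subgroup is of course fully invariant, and this is precisely the phrasing the paper uses.
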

\begin{proof}
  As $K=[G,G]$ is fully invariant in $G$, every endomorphism of $G$ is
  a lifting of  the endomorphism it induces on $G/K$.   We conclude by
  applying Theorem~\ref{thm:maintheorem}.
\end{proof}

The following  example allows us  to obtain  brace block of  any given
cardinality, consisting of distinct  operations, because
there are  commutative rings $S$  with unity  of any  given
cardinality. (The latter statement appears to be folklore, see for
instance~\cite{card}: if the cardinality $n$ is finite, one may take
$S = \Z / n \Z$. If the cardinality $n$ is infinite, one may take $S$
to be the
polynomial ring in $n$ indeterminates with integer coefficients.)

\begin{example}
\label{ex:CG}
	Let $R$ be  a commutative ring with unity, and  let $G$ be the
        Heisenberg group
  \begin{equation*}
    G = \Set{
      (a,b,c)\mid a,b,c\in R
    },
  \end{equation*}
  with group operation
  \begin{equation*}
    (a,b,c)\cdot (a',b',c')=(a+a',b+b',c+c'+ab').
  \end{equation*}
  Then $G$ is a  group of nilpotence class
  two: an immediate computation shows that
  \begin{equation*}
    [(a,b,c),(a',b',c')]
    =
    [(0,0,ab'-a'b)]
    \in
    \Set{
      (0,0,c)\mid c\in R
    }
    =
    Z(G).
  \end{equation*}
  For all $x\in R$, consider the map
  \begin{equation*}
    \psi_x\colon (a,b,c)\mapsto (xa,xb,x^2c).
  \end{equation*}
  It  is clear  that  $\psi_x\in \End(G)$,  so  $(\psi_x\mid x\in  R)$
  yields a brace  block $(\circ_x\mid x\in R)$ on  $G$. Explicitly, if
  $g=(a,b,c)$ and $h=(a',b',c')$, then
  \begin{equation*}
    g\circ_{x} h = g\cdot h\cdot (0,0,x(ab'-a'b)).
  \end{equation*}
  We deduce that this brace block consists of all distinct operations,
  and has cardinality $\Size{R}$. We consider two special cases.
  \begin{itemize}
  \item Suppose  that $R$ has characteristic  zero.  Then $\Z\subseteq
    R$, and it  is immediate to see  that the skew braces  of the kind
    $(G,\cdot,\circ_n)$, $n\in \Z$, are  not isomorphic. We have found
    a   brace  block   consisting   of  (at   least)  countably   many
    non-isomorphic skew braces.
  \item  Suppose that  $R$  is  a topological  ring.   Then  $G$ is  a
    topological  group with  the product  topology, and  if we  take a
    sequence $(r_n\mid n\in \N)$ of elements of $R$ converging to $0$,
    we deduce that the corresponding operations $(\circ_{r_n}\mid n\in
    \N)$ are distinct  and converge to the original one,  in the sense
    that for all $g,h\in G$,
    \begin{equation*}
      g \circ_{\infty} h = \lim_{n\to \infty} g \circ_n h = g \cdot h.
  \end{equation*}
  \end{itemize} 
  For example, if  $R=\Z_p$, the ring of $p$-adic  integers, then both
  the previous cases apply; for the latter, one can take $r_n=p^n$ for
  all $n\ge  0$. This addresses  a question  posed to us  by Cornelius
  Greither at the Conference on Hopf algebras and Galois module theory
  held (online)  in Omaha in  May 2021,  about finding a  brace blocks
  with distinct operations converging to the original one.
\end{example}

\subsection{More brace blocks of any given cardinality}

\begin{example}
  \label{ex:arb_card-two}
  Let $(X, \le)$ be a well-ordered set with first element
  $x_{0}$. Let $(G, \cdot)$ be the free group of class two on $X$, and
  take $A = G$ and $K = G' = Z(G)$; the latter subgroup is free
  abelian in the commutators $[u, v]$, 
  for $u > v$.

  For all $y \in X$, consider the endomorphism $\psi_{y}$ of $G / K$
  defined by
  \begin{equation*}
    \begin{cases}
      \lexpp{x_{0} K}{\psi_{y}} = y K,\\
      \lexpp{x K}{\psi_{y}} = K, & \text{for $x \in X \setminus \Set{x_{0}}$.}\\
    \end{cases}
  \end{equation*}
  Then for $y \in X$ we have
  \begin{align*}
    x_{0} \circ_{\psi_{y}, \trivial} x_{0}
    &=
    x_{0} \cdot
    \lexpp{x_{0}}{\lift{\psi_{y}}} \cdot x_{0} \cdot
    \lexpp{x_{0}}{\lift{\psi_{y}}}^{-1} 
    \\&=
    x_{0}^{2} \cdot x_{0}^{-1} \cdot
    y \cdot x_{0} \cdot y^{-1}
    \\&=
    x_{0}^{2} \cdot [x_{0}^{-1}, y]
    =
    x_{0}^{2} \cdot [y, x_{0}],
  \end{align*}
  so that all operations $\circ_{\psi_{y}, \trivial}$ are distinct, for $y
  \in X$.
\end{example}

Another example of this kind can be constructed in the realm of
(absolutely) free groups.

\begin{example}
  \label{ex:arb_card-free}
  Let $X$ be a non-empty set, and $a \notin X$. Let $(G, \cdot)$ be the free
  group on $X \cup \Set{a}$, and let $K = 1$, $A = \Span{a}$.

  For all $y \in X$, consider the endomorphism $\psi_{y}$ of $G$
  defined by
  \begin{equation*}
    \begin{cases}
      \lexpp{y}{\psi_{y}} = a,\\
      \lexpp{a}{\psi_{y}} = 1,\\
      \lexpp{x}{\psi_{y}} = 1, & \text{for $x \in X \setminus \Set{y}$.}\\
    \end{cases}
  \end{equation*}
  Then we have $x \circ_{\psi_{y}, \trivial} x = x \cdot x$ for $x \ne y$,
  and $y \circ_{\psi_{y}, \trivial} y = y \cdot a \cdot y \cdot a^{-1} = y
  \cdot y \cdot [y^{-1}, a] \ne y
  \cdot y$, so that all operations $\circ_{\psi_{y}, \trivial}$ are distinct, for $y
  \in X$.
\end{example}

\section{Set-theoretic solutions of the Yang--Baxter equation}
\label{sec:YB}

We  recall that  a  \emph{set-theoretic solution  of the  Yang--Baxter
  equation},  defined  in~\cite{Dri92},  is   a  pair  $(X,r)$,  where
$X\ne\emptyset$ is a set and
\begin{align*}
  r\colon     X\times     X&\to      X\times     X\\
                      (x,y)&\mapsto  (\sigma_x(y),\tau_y(x))
 \end{align*}
is a bijective map satisfying
\begin{equation*}
  (r\times\id_X)(\id_X\times             r)(r\times\id_X)=(\id_X\times
  r)(r\times\id_X)(\id_X\times r).\end{equation*}
We say that $(X,r)$ is
\begin{itemize}
\item \emph{non-degenerate} if for  all $x\in  X$, $\sigma_x$ and $\tau_x$  are
  bijective;
\item \emph{involutive} if $r^2=\id_{X\times X}$. 
\end{itemize}
In what follows, by a  \emph{solution} $(X,r)$ we mean a set-theoretic
non-degenerate solution of the Yang--Baxter equation.

The connection  between skew  braces and  set-theoretic non-degenerate
solutions of the  Yang--Baxter equation has been  developed in various
papers  in  the  last   few  years;  see,  for  example,~\cite{braces}
and~\cite{skew}.

In   the  next   statement,   we  summarise   some   of  the   results
of~\cite{braces,skew,Rum19,KT20a};   see~\cite[Section   5]{CS1}   for
further details.
\begin{proposition}
  \label{pro:solutions}
  Let  $(G,\cdot,\circ)$  be  a  skew  brace.  Write  $g^{-1}$  and
  $\overline{g}$ for  the inverse of  $g$ with respect to  $\cdot$ and
  $\circ$, respectively. Then $(G,r)$ and $(G,r')$ are solutions, where
  \begin{align*}
    r(g,h)&=(g^{-1}\cdot  (g\circ h),\overline{g^{-1}\cdot
      (g\circ h)} \circ  g \circ h),\\ r'(g,h)&=((g\circ
    h)\cdot  g^{-1},\overline{(g\circ h)\cdot  g^{-1}}  \circ g  \circ
    h).
  \end{align*}
  
  The solutions $(G, r_{\mathcal{G}})$ and $(G, r_{\mathcal{G}'})$ are
  one the inverse  of the other and coincide if  and only if $(G,\cdot
  )$ is abelian.
\end{proposition}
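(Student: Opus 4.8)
The plan is to recognise the two maps as the standard solutions attached to a skew brace and to its opposite, and then to supply by hand only the two elementary claims in the second half of the statement. Writing $\sigma_g(h) = g^{-1}\cdot(g\circ h)$ and $\tau_h(g) = \overline{\sigma_g(h)}\circ g\circ h$, so that $r(g,h) = (\sigma_g(h),\tau_h(g))$, I would first record the structural facts underlying the correspondence of~\cite{skew}. The map $\sigma_g = \lambda_g$ is an automorphism of $(G,\cdot)$: indeed $\lambda_g(h\cdot k) = g^{-1}\cdot(g\circ(h\cdot k)) = g^{-1}\cdot(g\circ h)\cdot g^{-1}\cdot(g\circ k) = \lambda_g(h)\cdot\lambda_g(k)$ is precisely the skew brace identity, and bijectivity is clear since $h\mapsto g\circ h$ and $x \mapsto g^{-1}\cdot x$ are bijections. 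Moreover $g\mapsto\lambda_g$ is a homomorphism $(G,\circ)\to\Aut(G,\cdot)$. Granting these, the assertion that $(G,r)$ is a non-degenerate solution is exactly the content of the cited correspondence: non-degeneracy of $\sigma_g$ is the automorphism property, and the componentwise form of the Yang--Baxter equation reduces to the homomorphism property of $\lambda$ together with the identity $\sigma_g(h)\circ\tau_h(g) = g\circ h$, which holds by construction since $\sigma_g(h)\circ\overline{\sigma_g(h)}\circ g\circ h = g\circ h$.

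For $(G,r')$ I would observe that $r'$ is obtained from the very same recipe applied to the opposite skew brace $(G,\cdot^{\opp},\circ)$, where $x\cdot^{\opp}y = y\cdot x$. Since the opposite of a skew brace is again a skew brace, and since inverses with respect to $\cdot$ and $\cdot^{\opp}$ coincide, one has $g^{-1}\cdot^{\opp}(g\circ h) = (g\circ h)\cdot g^{-1} = \sigma'_g(h)$ and the $\circ$-part is unchanged; hence $r'$ is the solution attached to the opposite skew brace, and is non-degenerate for free. Alternatively one verifies the componentwise conditions for $r'$ by the symmetric computation.

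The two final claims I would settle directly. For \emph{``inverse of each other''}, set $(u,v) = r'(g,h)$ and compute $u\circ v = \sigma'_g(h)\circ\tau'_h(g) = g\circ h$ (the analogue of the compatibility above); then $\sigma_u(v) = u^{-1}\cdot(u\circ v) = g\cdot(g\circ h)^{-1}\cdot(g\circ h) = g$ by telescoping, and $\tau_v(u) = \overline{g}\circ u\circ v = \overline{g}\circ(g\circ h) = h$, so $r(r'(g,h)) = (g,h)$; the mirror computation gives $r'(r(g,h)) = (g,h)$, whence $r' = r^{-1}$, both maps being bijective. For the last equivalence, note that $r = r'$ forces $\sigma_g = \sigma'_g$ (the second components then agree automatically), that is, $g^{-1}\cdot(g\circ h) = (g\circ h)\cdot g^{-1}$ for all $g,h$; since $h\mapsto g\circ h$ is onto, this reads $g^{-1}\cdot k = k\cdot g^{-1}$ for all $g,k\in G$, i.e.\ $(G,\cdot)$ is abelian, and conversely abelianness makes $\sigma_g$ and $\sigma'_g$ visibly equal.

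The main obstacle is the Yang--Baxter identity for $r$ itself: carried out by hand it is a lengthy check of the three componentwise compatibility relations against the skew brace axiom. I would avoid this grind by invoking the established correspondence of~\cite{braces,skew,Rum19,KT20a}, thereby reducing the genuine work to the homomorphism property of $\lambda$ and the two short end computations above.
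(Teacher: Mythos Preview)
Your proposal is correct and aligns with the paper's treatment: the paper does not give its own proof of this proposition at all, but simply records it as a summary of results from~\cite{braces,skew,Rum19,KT20a}, referring to~\cite[Section~5]{CS1} for details. Your approach does the same for the Yang--Baxter identity (invoking the established correspondence), while additionally supplying direct verifications of the ``mutual inverse'' and ``coincide iff abelian'' claims that the paper leaves entirely to the references; these extra computations are correct and make the statement more self-contained than the paper itself does.
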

\begin{remark}
	In Proposition~\ref{pro:solutions}, if $(G,\cdot,\circ)$ is a bi-skew brace, then we can reverse the role of the two operations, obtaining other two solutions. 
\end{remark}

For the next result, assume the setting of Section~\ref{sec:main}.
\begin{theorem}\label{thm:yb}
  Let         $(\psi,\alpha),(\phi,\beta)\in         \mathcal{A}\times
  \mathcal{B}$. Then $(G,r)$ and $(G,r')$ are solutions, where
  \begin{align*}
    &r(g,h)=(\lexp{g}{\lift{\psi}-\lift{\phi}}
    \cdot               h \cdot
    (\lexp{g}{\lift{\psi}-\lift{\phi}})^{-1}\cdot
    \beta(g^{-1},h)\cdot\alpha(g,h),\\
    &\qquad
    (\lexp{h}{\lift{\psi}})^{-1}\cdot\lexp{g}{\lift{\psi}-\lift{\phi}}
    \cdot     h^{-1}\cdot
    (\lexp{g}{\lift{\psi}-\lift{\phi}})^{-1}\cdot g
    \cdot  \lexp{g}{\lift{\psi}}\cdot h\cdot
    (\lexp{g}{\lift{\psi}})^{-1}\cdot\lexp{h}{\lift{\psi}}\\
    &\qquad\quad
    \cdot \beta(g,h)\cdot\alpha(h^{-1},g)),\\
    &r'(g,h)=(g\cdot      \lexp{g}{\lift{\psi}}\cdot     h\cdot
    (\lexp{g}{\lift{\psi}})^{-1}\cdot     \lexp{h}{\lift{\phi}}\cdot
    g^{-1}\cdot
    (\lexp{h}{\lift{\phi}})^{-1}\cdot    \beta(h,g^{-1})\cdot    \alpha(g,h),
    \\
    &\qquad         \lexp{h}{\lift{\phi}-\lift{\psi}}\cdot         g\cdot
    (\lexp{h}{\lift{\phi}-\lift{\psi}})^{-1}
    \cdot \beta(h,g)\cdot\alpha(h^{-1},g)).\\
  \end{align*}
\end{theorem}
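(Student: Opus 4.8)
The plan is to invoke Proposition~\ref{pro:solutions} directly, rather than verifying the Yang--Baxter equation by hand. By Theorem~\ref{thm:maintheorem} the triple $(G,\circ_{\phi,\beta},\circ_{\psi,\alpha})$ is a skew brace, so Proposition~\ref{pro:solutions} immediately yields that $(G,r)$ and $(G,r')$ are solutions, where $r$ and $r'$ are built from the two operations in the manner prescribed there. The entire content of the theorem is therefore to unwind those formulas and check that they simplify to the stated closed-form expressions. So the real work is computational bookkeeping, not conceptual.

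Concretely, I would first fix liftings $\lift{\psi}$ and $\lift{\phi}$ of $\psi$ and $\phi$, and record the two operations explicitly as
\begin{equation*}
  g\circ_{\psi,\alpha}h
  =
  g\cdot\lexp{g}{\lift{\psi}}\cdot h\cdot(\lexp{g}{\lift{\psi}})^{-1}\cdot\alpha(g,h),
\end{equation*}
together with the inverse $\overline{g}$ with respect to $\circ_{\phi,\beta}$ computed in the proof of Theorem~\ref{thm:maintheorem}, namely $\overline{g}=(\lexp{g}{\lift\phi})^{-1}\cdot g^{-1}\cdot\lexp{g}{\lift\phi}\cdot\beta(g,g)$. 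Then I would substitute into the defining formulas of Proposition~\ref{pro:solutions}. For the first coordinate of $r$ I compute $g^{-1}\cdot(g\circ_{\psi,\alpha}h)$ against the analogous expression with ``$\circ_{\phi,\beta}$'', and similarly for $r'$ using $(g\circ h)\cdot g^{-1}$; for the second coordinate I apply $\overline{\,\cdot\,}\circ g\circ h$ with the correct operation. Throughout I would use that $K\le Z(G,\cdot)$ and that $A/K$ is abelian, so that the identity $\iota(\lexp{g}{\lift{\psi}})=\iota(\lexp{g}{\psi^{\uparrow\uparrow}})$ and the commutation relation~\eqref{eq:iota} let me slide central commutators and bilinear terms past each other freely.

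The main obstacle I anticipate is purely the volume and delicacy of the rearrangement: one must carefully track how the difference $\lift{\psi}-\lift{\phi}$ arises from combining a $\circ_{\psi,\alpha}$-term with a $\circ_{\phi,\beta}$-inverse, and ensure the bilinear contributions $\alpha(g,h)$, $\beta(g,h)$ assemble into exactly $\beta(g^{-1},h)\cdot\alpha(g,h)$ and its relatives with the correct arguments and signs. Because the maps $\lift{\psi},\lift{\phi}$ need not be endomorphisms, I cannot treat $\lexp{g}{\lift{\psi}-\lift{\phi}}$ as literally multiplicative in $g$; instead I must use that it is well-defined modulo $K$ and that its conjugation action $\iota(\cdot)$ is what actually appears, which is insensitive to the $K$-ambiguity. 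The cleanest route is to reduce everything to the form $g\cdot\lexp{h}{\iota(\cdot)}\cdot(\text{central factor})$, exploiting the second expression for $\circ_{\psi,\alpha}$ in terms of $\iota$, and only then read off the two coordinates.

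As a sanity check before committing to the general computation, I would verify the stated formulas in the degenerate case $K=1$, $\alpha=\beta=\trivial$, where all central terms vanish and the expressions must reduce to the classical solution attached to the skew brace $(G,\cdot,\circ_{\psi})$ with $\circ_{\psi}$ as in the revisited Koch construction; matching that special case confirms the placement of inverses and the roles of $\lift{\psi}$ versus $\lift{\phi}$ in the two coordinates.
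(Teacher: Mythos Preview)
Your plan is exactly the paper's: invoke Theorem~\ref{thm:maintheorem} to get the bi-skew brace $(G,\circ_{\phi,\beta},\circ_{\psi,\alpha})$, then feed it into Proposition~\ref{pro:solutions} and unwind the formulas, the paper noting only that this ``long but straightforward'' computation parallels \cite[Theorem~5.3]{CS1}. One small clarification: in Proposition~\ref{pro:solutions} the role of ``$\cdot$'' is played by $\circ_{\phi,\beta}$ and ``$\circ$'' by $\circ_{\psi,\alpha}$, so the first coordinate of $r$ is $\overline{g}\circ_{\phi,\beta}(g\circ_{\psi,\alpha}h)$---precisely the quantity already computed in the proof of Theorem~\ref{thm:maintheorem}---and the second coordinate requires the $\circ_{\psi,\alpha}$-inverse, given by the same formula with $(\psi,\alpha)$ in place of $(\phi,\beta)$.
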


\begin{proof}
  By           Theorem~\ref{thm:maintheorem},
  $(G,\circ_{\phi,\beta},\circ_{\psi,\alpha})$ is a  bi-skew brace, so
  that we  can apply  Proposition~\ref{pro:solutions}.  The  (long but
  straightforward)  computation  to  exhibit  the  solutions  in  this
  explicit formulation is similar to that of~\cite[Theorem 5.3]{CS1}.
\end{proof}

\begin{corollary}\label{cor:solutions}
  Let  $(\psi,\alpha)\in \mathcal{A}\times  \mathcal{B}$.  Then $(G,r)$, $(G,r')$, $(G,\widetilde r)$, $(G,\widetilde{r}')$ are solutions, where
  \begin{align*}
    &r(g,h)=(\lexp{g}{\lift{\psi}}\cdot                    h\cdot
    (\lexp{g}{\lift{\psi}})^{-1}\cdot          \alpha(g,h),\\          &\qquad
    (\lexp{h}{\lift{\psi}})^{-1}\cdot\lexp{g}{\lift{\psi}}\cdot          h^{-1}\cdot
    (\lexp{g}{\lift{\psi}})^{-1}\cdot   g\cdot   \lexp{g}{\lift{\psi}}\cdot   h\cdot
    (\lexp{g}{\lift{\psi}})^{-1}\cdot\lexp{h}{\lift{\psi}}\cdot\alpha(h^{-1},g));\\
    &r'(g,h)=(g\cdot      \lexp{g}{\lift{\psi}}\cdot     h\cdot
    (\lexp{g}{\lift{\psi}})^{-1}\cdot  g^{-1}\cdot   \alpha(g,h),  \\  &\qquad
    (\lexp{h}{\lift{\psi}})^{-1}\cdot                                  g\cdot
    \lexp{h}{\lift{\psi}}\cdot\alpha(h^{-1},g));\\
    &\widetilde r(g,h)=((\lexp{g}{\lift{\psi}})^{-1}\cdot         h\cdot
    \lexp{g}{\lift{\psi}}\cdot           \alpha(g^{-1},h),\\           &\qquad
    (\lexp{g}{\lift{\psi}})^{-1}\cdot h^{-1}\cdot  \lexp{g}{\lift{\psi}}\cdot g\cdot
    h \cdot \alpha(g,h));\\
    &\widetilde r'(g,h)=(g\cdot      h\cdot     \lexp{h}{\lift{\psi}}\cdot
    g^{-1}\cdot (\lexp{h}{\lift{\psi}})^{-1}\cdot \alpha(h,g^{-1}), \\ &\qquad
    \lexp{h}{\lift{\psi}}\cdot g\cdot (\lexp{h}{\lift{\psi}})^{-1}\cdot \alpha(h,g)).
  \end{align*}
\end{corollary}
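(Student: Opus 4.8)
The plan is to derive Corollary~\ref{cor:solutions} from Theorem~\ref{thm:yb} by specialising, and to double-check consistency with Proposition~\ref{pro:solutions}. The corollary concerns a \emph{single} pair $(\psi,\alpha)\in\mathcal{A}\times\mathcal{B}$, whereas Theorem~\ref{thm:yb} is stated for two pairs. The natural first move is therefore to set $(\phi,\beta)=(\Zero,\trivial)$ (so that $\lexp{g}{\lift{\phi}}=1$ and $\beta\equiv 1$) in Theorem~\ref{thm:yb}. With these substitutions, $\lexp{g}{\lift{\psi}-\lift{\phi}}=\lexp{g}{\lift{\psi}}$, and all $\beta$-factors disappear. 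Reading off the formulas, the pair $(G,r)$, $(G,r')$ from Theorem~\ref{thm:yb} collapses precisely to the displayed $(G,r)$, $(G,r')$ of the corollary. This already accounts for the first two solutions, and is essentially a bookkeeping verification: one simply tracks the cancellation of the $\phi$- and $\beta$-terms.

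The remaining two solutions $(G,\widetilde r)$ and $(G,\widetilde r')$ require a second idea, since they are genuinely new maps rather than the same formulas with $\phi$ set trivial. The key observation is that $(G,\circ_{\psi,\alpha})$ is a \emph{bi-skew} brace by Theorem~\ref{thm:maintheorem} (and the remark following it), so $(G,\circ_{\psi,\alpha},\cdot)$ is also a skew brace. Hence I would invoke the remark after Proposition~\ref{pro:solutions}, which permits reversing the roles of the two operations ``$\cdot$'' and ``$\circ_{\psi,\alpha}$'' to obtain two further solutions. Concretely, I apply Proposition~\ref{pro:solutions} to the skew brace $(G,\circ_{\psi,\alpha},\cdot)$: now ``$\circ$'' in the proposition is the original product ``$\cdot$'', and the inverse $\overline{g}$ there becomes the ordinary inverse $g^{-1}$, while the group-theoretic inverse with respect to $\circ_{\psi,\alpha}$ plays the role of $g^{-1}$ in the proposition. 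Expanding the resulting formulas, using that $g^{-1}\circ_{\psi,\alpha}(g\cdot h)$ and related expressions unwind via the definition $g\circ_{\psi,\alpha}h = g\cdot\lexp{g}{\lift{\psi}}\cdot h\cdot(\lexp{g}{\lift{\psi}})^{-1}\cdot\alpha(g,h)$, should yield exactly $\widetilde r$ and $\widetilde r'$.

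The main computational obstacle is the reversed-roles expansion, because one must carefully compute the $\circ_{\psi,\alpha}$-inverse and then substitute it into the proposition's formula, tracking how the lifting $\lift{\psi}$ and the bilinear map $\alpha$ reorganise. For instance, the first component of $\widetilde r$, namely $(\lexp{g}{\lift{\psi}})^{-1}\cdot h\cdot\lexp{g}{\lift{\psi}}\cdot\alpha(g^{-1},h)$, has the conjugating element \emph{inverted} relative to $r$, which is the signature of swapping the two operations; verifying that the $\alpha$-argument correctly becomes $\alpha(g^{-1},h)$ rather than $\alpha(g,h)$ is where sign- and side-conventions must be handled with care. I expect these to be long but mechanical manipulations, entirely analogous to those in the proof of Theorem~\ref{thm:yb} and to \cite[Theorem 5.3]{CS1}, so the honest write-up would state the specialisation $(\phi,\beta)=(\Zero,\trivial)$ for the first two maps, invoke the bi-skew structure and the remark after Proposition~\ref{pro:solutions} for the latter two, and relegate the verification to a routine (if tedious) direct computation.
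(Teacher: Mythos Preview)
Your proposal is correct and matches the paper's approach. The paper's proof is a single sentence: specialise $(\phi,\beta)$ to the trivial pair so that $\circ_{\phi,\beta}$ becomes ``$\cdot$'', and apply Theorem~\ref{thm:yb} to the bi-skew brace $(G,\cdot,\circ_{\psi,\alpha})$. The only cosmetic difference is that for $\widetilde r$ and $\widetilde r'$ the paper implicitly applies Theorem~\ref{thm:yb} a second time with the roles of the two pairs swapped (taking the theorem's $(\psi,\alpha)$ to be trivial and its $(\phi,\beta)$ to be the corollary's $(\psi,\alpha)$), whereas you drop back one level to Proposition~\ref{pro:solutions} and the bi-skew remark; since Theorem~\ref{thm:yb} is itself just Proposition~\ref{pro:solutions} made explicit, this is the same argument.
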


\begin{proof}
  It  is   enough to apply  Theorem~\ref{thm:yb} to the  bi-skew brace
  $(G,\cdot,\circ_{\psi,\alpha})$, as when both $\phi$ and $\beta$ are
  trivial,   the   operation   $\circ_{\phi,\beta}$   coincides   with
  ``$\cdot$''.
\end{proof}
\begin{remark}
  Note   that  the   solutions  given   in~\cite[Theorem  5.1]{KST20},
  \cite[Corollary    5.4]{Koch-Abelian}, and~\cite[Theorem   4.15]{Koc22}
   are particular  instances of the solutions
  exhibited in Theorem~\ref{thm:yb} and Corollary~\ref{cor:solutions}.
\end{remark}

\section{Brace blocks and graphs}

\label{sec:graphs}
We briefly discuss here the concept of a brace block from the point of
view of graphs.

Let $(G,1)$  be a pointed set.  Denote by $\Perm(G)$  the group of
permutations on the  set $G$. For $\eta\in \Perm(G)$  and $g\in G$,
write $\lexp{g}{\eta}$ for the image of $g$ under $\eta$.

\begin{definition}
  A subgroup $N$ of $\Perm(G)$ is \emph{regular} if the map
  \begin{align*}
    N&\to G\\ \eta&\mapsto \lexp{1}{\eta}
  \end{align*}
  is bijective.  We  denote by $\nu$ the inverse of  this map, so that
  $\nu(g)$    is   the    unique    element   of    $N$   such    that
  $\lexp{1}{\nu(g)}=g$, and $N=\Set{\nu(g):g\in G}$.
\end{definition}

\begin{example}[Cayley's theorem]
  \label{exa:lrr}
  Let ``$\circ$'' be  an operation on $G$  such that $(G,\circ)$
  is   a   group   with  identity   $1$.   Take   \begin{align*}
    \lambda_{\circ} \colon (G, \cdot) &\to \Perm(G)\\ g &\mapsto
    (h \mapsto g \circ h),
  \end{align*}
  the   \emph{left    regular   representation}   with    respect   to
  ``$\circ$''.  Then $\lambda_{\circ}(G)$  is  a  regular subgroup  of
  $\Perm(G)$.
\end{example}

Conversely,  every  regular subgroup  $N$  of  $\Perm(G)$ occurs  as
$\lambda_{\circ}(G)$, for a suitable  group operation ``$\circ$'' on
$G$.   Indeed, if  $N=\{\nu(g):g\in G\}$,  we can  use transport  of
structure  via  the bijection  $\nu$  to  obtain a  group  operation
``$\circ$'' on $G$  such that $\nu(g \circ h) =  \nu(g) \nu(h)$. It
is now immediate to see that  $\nu = \lambda_{\circ}$, so that $N$ is
precisely the image of the  left regular representation with respect
to ``$\circ$''. We have obtained the following result.
\begin{proposition}\label{pro:}
  Let  $(G,1)$  be  a  pointed   set.  The  following  data  are
  equivalent:
  \begin{enumerate}
  \item an  operation $\circ$ on $G$  such that $(G, \circ)$  is group
    with identity $1$;
  \item a regular subgroup $N=\Set{\nu(g):g\in G}\le\Perm(G)$.
  \end{enumerate}
  The correspondence is given by
  \begin{equation*}
    g\circ h=\lexp{h}{\nu(g)}.
  \end{equation*}
  In particular, the map
  \begin{equation*}
    \nu\colon (G,\circ)\to N
  \end{equation*}
  is an isomorphism, and $N=\lambda_{\circ}(G)$.
\end{proposition}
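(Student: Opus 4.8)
The plan is to prove the two implications separately and then read off the explicit correspondence together with the final two assertions; most of the ingredients have in fact already been assembled in the discussion preceding the statement.

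For the implication $(1)\Rightarrow(2)$, suppose $\circ$ is an operation making $(G,\circ)$ a group with identity $1$. I would take $N=\lambda_{\circ}(G)$ and invoke Example~\ref{exa:lrr} (Cayley's theorem) to conclude that $N$ is a regular subgroup of $\Perm(G)$. Setting $\nu=\lambda_{\circ}$, the normalisation required in the definition of a regular subgroup holds because $\lexp{\nu(g)}{1}=g\circ 1=g$, and the correspondence formula is immediate: by definition of the left regular representation, $\lexp{\nu(g)}{h}=\lexp{\lambda_{\circ}(g)}{h}=g\circ h$.

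For $(2)\Rightarrow(1)$, suppose $N=\Set{\nu(g):g\in G}$ is a regular subgroup, so that $\nu\colon G\to N$ is a bijection. I would use transport of structure through $\nu$, defining $\circ$ to be the unique operation for which $\nu(g\circ h)=\nu(g)\,\nu(h)$, the product being taken in the group $N\le\Perm(G)$; this automatically makes $(G,\circ)$ a group with $\nu$ an isomorphism $(G,\circ)\to N$. To recover the stated formula I would compute, using the defining property $\lexp{\nu(x)}{1}=x$ of $\nu$ together with the right-to-left convention for composing permutations (the one compatible with the left-exponent notation of the paper),
\begin{equation*}
  g\circ h
  =
  \lexp{\nu(g\circ h)}{1}
  =
  \lexp{(\nu(g)\nu(h))}{1}
  =
  \lexp{\nu(g)}{(\lexp{\nu(h)}{1})}
  =
  \lexp{\nu(g)}{h}.
\end{equation*}
The identity of $(G,\circ)$ is $1$, since $\nu$ sends it to the identity permutation $\id\in N$ and $\lexp{\id}{1}=1$. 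Finally, the equalities $\lexp{\nu(g)}{h}=g\circ h=\lexp{\lambda_{\circ}(g)}{h}$ hold for every $h\in G$, whence $\nu(g)=\lambda_{\circ}(g)$ for all $g$; thus $\nu=\lambda_{\circ}$ and $N=\lambda_{\circ}(G)$, which also yields the asserted isomorphism $\nu\colon(G,\circ)\to N$.

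Both directions are essentially routine, and the two constructions are visibly inverse to one another. The only point demanding care is the bookkeeping of the composition convention in $\Perm(G)$: one must compose permutations right-to-left, so that $\nu$ is genuinely a homomorphism rather than an anti-homomorphism, in order for the formula $g\circ h=\lexp{\nu(g)}{h}$ to emerge with the roles of $g$ and $h$ in the correct positions. Once this convention is fixed, no further obstacle remains.
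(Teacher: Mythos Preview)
Your proof is correct and follows essentially the same route as the paper: the paper establishes the proposition in the paragraph immediately preceding it (``Conversely, every regular subgroup \dots''), using Cayley's theorem for $(1)\Rightarrow(2)$ and transport of structure via $\nu$ for $(2)\Rightarrow(1)$, then remarking that $\nu=\lambda_{\circ}$ is ``immediate''. You have simply made that last step explicit with the displayed computation, and your care about the right-to-left composition convention is well placed.
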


The  following result  is basically~\cite[Theorem  4.2]{skew}, with  a
slight change of point of view.

\begin{theorem}\label{thm:regskew}
  Let $(G,\circ_{1})$ and $(G,\circ_{2})$ be groups with identity $1$. Then
  $(G,\circ_{1},\circ_{2})$   is   a   skew    brace   if   and   only   if
  $\lambda_{\circ_{2}}(G)$ normalises $\lambda_{\circ_{1}}(G)$.
\end{theorem}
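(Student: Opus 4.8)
The plan is to prove both directions using the explicit correspondence from Proposition~\ref{pro:} together with the skew brace axiom. Recall that for each operation ``$\circ_i$'' on $G$ with identity $1$, the map $\nu_i \colon (G, \circ_i) \to \lambda_{\circ_i}(G)$ with $\lexp{h}{\nu_i(g)} = g \circ_i h$ is an isomorphism onto the regular subgroup $\lambda_{\circ_i}(G) \le \Perm(G)$. I will write $\nu_i = \lambda_{\circ_i}$ and exploit that $\lexp{h}{\nu_2(g)} = g \circ_2 h$ and $\lexp{h}{\nu_1(g)} = g \circ_1 h$. The key computation throughout is to express the conjugate $\nu_2(g)\,\nu_1(k)\,\nu_2(g)^{-1}$ of an element of $\lambda_{\circ_1}(G)$ by an element of $\lambda_{\circ_2}(G)$ and read off, via the bijection, which permutation it is.

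First I would establish the precise translation of the normalising condition. Since $\lambda_{\circ_1}(G)$ is regular, every element of $\Perm(G)$ that normalises it, when restricted appropriately, sends $\lambda_{\circ_1}(G)$ into itself; so $\lambda_{\circ_2}(G)$ normalises $\lambda_{\circ_1}(G)$ precisely when for all $g, k \in G$ there exists a (necessarily unique) element $z = z(g,k) \in G$ with
\begin{equation*}
  \nu_2(g)\,\nu_1(k)\,\nu_2(g)^{-1} = \nu_1(z).
\end{equation*}
Evaluating both sides at an arbitrary $h \in G$ and using the defining relations of the $\nu_i$, the left-hand side applied to $h$ unwinds to an expression in the two operations, while the right-hand side is $z \circ_1 h$. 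The plan is to massage the skew brace identity for $(G, \circ_1, \circ_2)$, namely $g \circ_2 (h \circ_1 k) = (g \circ_2 h) \circ_1 \overline{g} \circ_1 (g \circ_2 k)$ where $\overline{g}$ is the $\circ_2$-inverse, into exactly the statement that such a $z$ exists with the correct value, thereby matching the two sides.

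For the forward direction I would assume $(G, \circ_1, \circ_2)$ is a skew brace and show the conjugate $\nu_2(g)\,\nu_1(k)\,\nu_2(g)^{-1}$ agrees as a permutation with $\nu_1$ of an appropriate element: concretely, one checks that conjugation by $\nu_2(g)$ carries $\nu_1(k)$ to $\nu_1$ of the element obtained by the brace axiom, so the image lands back in $\lambda_{\circ_1}(G)$, giving normalisation. For the converse I would assume normalisation, extract the element $z(g,k)$ as above, and then reverse the computation: comparing $\lexp{h}{\nu_1(z)}$ with the brace-axiom expression forces the skew brace identity to hold for all $h$, hence $(G, \circ_1, \circ_2)$ is a skew brace. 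The main obstacle, and the step demanding the most care, is the bookkeeping in evaluating the conjugate permutation at a general point $h$: one must correctly compose $\nu_2(g)^{-1}$, then $\nu_1(k)$, then $\nu_2(g)$ acting on $h$, keeping the two group operations and the $\circ_2$-inverse straight, and then recognise the resulting expression as precisely an application of the skew brace law. Once that identification is set up cleanly, both implications fall out symmetrically from the same equation read in opposite directions.
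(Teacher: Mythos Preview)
The paper does not actually supply a proof of this theorem; it merely records that the result ``is basically~\cite[Theorem 4.2]{skew}, with a slight change of point of view.'' Your outline is the standard argument behind that citation and is correct in structure: compute the conjugate $\nu_2(g)\,\nu_1(k)\,\nu_2(g)^{-1}$ at a generic point, change variables, and recognise the skew brace identity.

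One genuine slip to fix: in the skew brace axiom for $(G,\circ_1,\circ_2)$ the middle term is the $\circ_1$-inverse of $g$, not the $\circ_2$-inverse. Compare the paper's definition, where for $(G,\cdot,\circ)$ one has $g\circ(h\cdot k)=(g\circ h)\cdot g^{-1}\cdot(g\circ k)$ with $g^{-1}$ taken in $(G,\cdot)$. With this corrected, your conjugation computation runs as follows: writing $h=g\circ_2 h'$ (so that $\lexp{h}{\nu_2(g)^{-1}}=h'$), one gets
\[
\lexp{h}{\nu_2(g)\,\nu_1(k)\,\nu_2(g)^{-1}}=g\circ_2(k\circ_1 h'),
\]
and the skew brace law rewrites the right-hand side as $(g\circ_2 k)\circ_1 g^{-1_{\circ_1}}\circ_1(g\circ_2 h')=z\circ_1 h$ with $z=(g\circ_2 k)\circ_1 g^{-1_{\circ_1}}$. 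Hence the conjugate equals $\nu_1(z)\in\lambda_{\circ_1}(G)$, giving normalisation; the converse is the same equation read backwards. Note that the $\circ_2$-inverse does appear, but only when you interpret $\nu_2(g)^{-1}$ as $\nu_2$ of the $\circ_2$-inverse of $g$; it is not the inverse occurring in the brace axiom itself.
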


Now we can consider graphs.

\begin{definition}
  Let $(G,1)$ be a pointed set.  The \emph{normalising graph} of
  $G$  is the  undirected  graph $\kN$  whose  vertices are  the
  regular subgroups  of $\Perm(G)$,  and where two  vertices are
  joined by an  edge if and only if  the corresponding subgroups
  normalise each other.
\end{definition}

By Theorem~\ref{thm:regskew}, we  derive that having an  edge in the
normalising graph is equivalent to  having a bi-skew brace structure
on $G$.   In particular, \emph{cliques} (complete  subgraphs) in the
normalising graph of $(G, 1)$ correspond to brace blocks.

Similar  ideas and  correspondences  were  studied in~\cite{Koh21}.  A
particular case of the normalising graph is tackled in~\cite{Spa22}.

\providecommand{\bysame}{\leavevmode\hbox to3em{\hrulefill}\thinspace}
\providecommand{\MR}{\relax\ifhmode\unskip\space\fi MR }
\providecommand{\MRhref}[2]{%
  \href{http://www.ams.org/mathscinet-getitem?mr=#1}{#2}
}
\providecommand{\href}[2]{#2}


\begin{thebibliography}{GAP19}

\bibitem[BG22]{RB}
Valeriy~G. Bardakov and Vsevolod Gubarev, \emph{Rota-{B}axter groups, skew left
  braces, and the {Y}ang-{B}axter equation}, J. Algebra \textbf{596} (2022),
  328--351, \url{https://arxiv.org/abs/2105.00428}. \MR{4370524}

\bibitem[Byo96]{Byo96}
Nigel~P. Byott, \emph{Uniqueness of {H}opf {G}alois structure for separable
  field extensions}, Comm. Algebra \textbf{24} (1996), no.~10, 3217--3228.
  \MR{1402555}

\bibitem[Car16]{car-auto}
Andrea Caranti, \emph{A simple construction for a class of {$p$}-groups with
  all of their automorphisms central}, Rend. Semin. Mat. Univ. Padova
  \textbf{135} (2016), 251--258, \url{https://arxiv.org/abs/1406.7651}.
  \MR{3506071}

\bibitem[Car18]{p4}
A.~Caranti, \emph{Multiple holomorphs of finite {$p$}-groups of class two}, J.
  Algebra \textbf{516} (2018), 352--372,
  \url{https://arxiv.org/abs/1801.10410}. \MR{3863482}

\bibitem[Chi19]{Childs-bi-skew}
Lindsay~N. Childs, \emph{Bi-skew braces and {H}opf {G}alois structures}, New
  York J. Math. \textbf{25} (2019), 574--588. \MR{3982254}

\bibitem[CJO14]{CJO}
Ferran Ced\'{o}, Eric Jespers, and Jan Okni\'{n}ski, \emph{Braces and the
  {Y}ang-{B}axter equation}, Comm. Math. Phys. \textbf{327} (2014), no.~1,
  101--116. \MR{3177933}

\bibitem[Cla]{card}
Pete~L. Clark, \emph{Some cardinality questions},
  \url{http://alpha.math.uga.edu/~pete/settheorypart4.pdf}.

\bibitem[CS21]{CS1}
A.~Caranti and L.~Stefanello, \emph{From endomorphisms to bi-skew braces,
  regular subgroups, the {Y}ang--{B}axter equation, and {H}opf--{G}alois
  structures}, J. Algebra \textbf{587} (2021), 462--487,
  \url{https://arxiv.org/abs/2104.01582}. \MR{4304796}

\bibitem[CS22]{CS22-p}
A.~Caranti and L.~Stefanello, \emph{Skew braces from {Rota}--{Baxter}
  operators: A cohomological characterisation, and some examples},
  \url{https://arxiv.org/abs/arXiv:2201.03936}, to appear, Ann. Mat. Pura Appl.
  (4), \url {https://doi.org/10.1007/s10231-022-01230-w}.

\bibitem[Dri92]{Dri92}
V.~G. Drinfel'd, \emph{On some unsolved problems in quantum group theory},
  Quantum groups ({L}eningrad, 1990), Lecture Notes in Math., vol. 1510,
  Springer, Berlin, 1992, pp.~1--8. \MR{1183474}

\bibitem[GAP19]{GAP4}
The GAP~Group, \emph{{GAP -- Groups, Algorithms, and Programming, Version
  4.10.2}}, 2019, \url{https://www.gap-system.org}.

\bibitem[GP87]{GP}
Cornelius Greither and Bodo Pareigis, \emph{Hopf {G}alois theory for separable
  field extensions}, J. Algebra \textbf{106} (1987), no.~1, 239--258.
  \MR{878476}

\bibitem[GV17]{skew}
L.~Guarnieri and L.~Vendramin, \emph{Skew braces and the {Y}ang-{B}axter
  equation}, Math. Comp. \textbf{86} (2017), no.~307, 2519--2534. \MR{3647970}

\bibitem[Koc21]{Koch-Abelian}
Alan Koch, \emph{Abelian maps, bi-skew braces, and opposite pairs of
  {H}opf-{G}alois structures}, Proc. Amer. Math. Soc. Ser. B \textbf{8} (2021),
  189--203, \url{https://arxiv.org/abs/2007.08967}. \MR{4273165}

\bibitem[Koc22]{Koc22}
\bysame, \emph{Abelian maps, brace blocks, and solutions to the {Y}ang-{B}axter
  equation}, J. Pure Appl. Algebra \textbf{226} (2022), no.~9, Paper No.
  107047, \url{https://arxiv.org/abs/2102.06104}. \MR{4381676}

\bibitem[Koh22]{Koh21}
Timothy Kohl, \emph{Mutually normalizing regular permutation groups and
  {Z}appa-{S}z\'{e}p extensions of the holomorph}, Rocky Mountain J. Math.
  \textbf{52} (2022), no.~2, 567--598,
  \url{https://arxiv.org/abs/arXiv:2005.10989}. \MR{4423796}

\bibitem[KST20]{KST20}
Alan Koch, Laura Stordy, and Paul~J. Truman, \emph{Abelian fixed point free
  endomorphisms and the {Y}ang-{B}axter equation}, New York J. Math.
  \textbf{26} (2020), 1473--1492. \MR{4184834}

\bibitem[KT20]{KT20a}
Alan Koch and Paul~J. Truman, \emph{Opposite skew left braces and
  applications}, J. Algebra \textbf{546} (2020), 218--235. \MR{4033084}

\bibitem[Nic96]{nq}
Werner Nickel, \emph{Computing nilpotent quotients of finitely presented
  groups}, Geometric and computational perspectives on infinite groups
  ({M}inneapolis, {MN} and {N}ew {B}runswick, {NJ}, 1994), DIMACS Ser. Discrete
  Math. Theoret. Comput. Sci., vol.~25, Amer. Math. Soc., Providence, RI, 1996,
  pp.~175--191. \MR{1364184}

\bibitem[Rum07]{braces}
Wolfgang Rump, \emph{Braces, radical rings, and the quantum {Y}ang-{B}axter
  equation}, J. Algebra \textbf{307} (2007), no.~1, 153--170. \MR{2278047}

\bibitem[Rum19]{Rum19}
\bysame, \emph{A covering theory for non-involutive set-theoretic solutions to
  the {Y}ang-{B}axter equation}, J. Algebra \textbf{520} (2019), 136--170.
  \MR{3881192}

\bibitem[Spa22]{Spa22}
Filippo Spaggiari, \emph{The mutually normalizing regular subgroups of the
  holomorph of a cyclic group of prime power order},
  \url{https://arxiv.org/abs/arXiv:2203.09316}.

\bibitem[SV18]{SV2018}
Agata Smoktunowicz and Leandro Vendramin, \emph{On skew braces (with an
  appendix by {N}. {B}yott and {L}. {V}endramin)}, J. Comb. Algebra \textbf{2}
  (2018), no.~1, 47--86. \MR{3763907}

\end{thebibliography}
\end{document}